\documentclass[dvipdfmx]{amsart}
\usepackage{amssymb, amsmath}
\usepackage{hyperref}

\newtheorem{thm}{Theorem}[section]

\newtheorem{lemma}[thm]{Lemma}

\newtheorem{prop}[thm]{Proposition}

\theoremstyle{definition}
\newtheorem{dfn}[thm]{Definition}
\newtheorem{exa}[thm]{Example}
\theoremstyle{remark}
\newtheorem*{remark}{Remark}

\def\QQ{\mathord{\mathbb{Q}}}

\def\ZZ{\mathord{\mathbb{Z}}}

\let\oldbigwedge\bigwedge
\renewcommand{\bigwedge}{\mathord{\oldbigwedge}}

\title{An elementary construction of the ring of dual $K$-$Q$-cancellation property}
\author{Shinsuke Iwao}
\address{Faculty of Business and Commerce, Keio University, Hiyosi 4–1–1, Kohoku-ku, Yokohama-si, Kanagawa 223-8521, Japan.}
\email{iwao-s@keio.jp}
\thanks{
This paper will be published in \textit{RIMS K\^{o}ky\^{u}roku Bessatsu}, which accepts papers written in both Japanese and English.
This paper was originally written in Japanese and was translated into English for publication. The author is grateful to the anonymous referee who encouraged him to prepare an English version. This work was partially supported by the Grant-in-Aid for Scientific Research 19K03605, 22K03239, 23K03056}  
\keywords{\textit{Schubert calculus, $K$-theoretic analog, flag variety, symmetric functions}}         
\subjclass[2020]{05E05, 19L47, 14M15}
\begin{document}

\maketitle

\begin{abstract}      
This paper presents an elementary introduction on $K$-theoretic $Q$-functions, which were introduced by Ikeda and Naruse in 2013. These functions, which serve as $K$-theoretic analogs of Schur $Q$-functions, are known to possess combinatorial and algebraic constructions. 
In a 2022 paper, the author introduced ``$\beta$-deformed power-sums" to provide a simpler, more algebraic construction of these functions. 
Since the original approach relies on fermionic operators and vacuum expectation values, this paper presents a more accessible, purely algebraic treatment, following the exposition of Schur $Q$-functions in Macdonald's standard textbook.
We also show that the algebra of dual $Q$-cancellation property with integer coefficients is generated by dual $K$-$Q$-functions associated with an odd row partition.
\end{abstract}

\section{Introduction}

\subsection{About this paper}

This paper aims to give an elementary introduction on results in the paper \cite{iwao2022,iwao2023} of the author.
The \emph{$K$-theoretic $Q$-functions} (or \emph{$K$-$Q$ functions}) were introduced in 2013 by Ikeda and Naruse~\cite{IKEDA201322}.  
These functions were introduced in order to represent Schubert classes in the $K$-theory of the Lagrangian Grassmannian,
and can be regarded as $K$-theoretic analogues of the \emph{Schur $Q$-functions} (see~\cite[§III-8]{macbook}).

The $K$-$Q$ functions are known to have many interesting properties, including a \emph{Pfaffian formula}~\cite{HUDSON2017115} and a \emph{combinatorial expression in terms of Young diagrams}~\cite{IKEDA201322}.  
In parallel with the classical Schur $Q$-functions, they also admit an algebraic construction using the power-sum and the $q$-function~\cite[III,\S 8]{macbook}.

In a previous work~\cite{iwao2022}, the author introduced a certain generalization of the power-sum called the \emph{$\beta$-deformed power-sum} (or simply, the \emph{$\beta$-power-sum}), and constructed the $K$-$Q$ functions in terms of these $\beta$-power-sums.
While the original paper~\cite{iwao2022} relies on vacuum expectation value expressions of fermionic operators, which may be difficult to read without sufficient background, this paper aims to present a purely algebraic construction in the style of Macdonald's textbook~\cite{macbook}.

\subsection{Background}

Throughout this paper, we let \( \beta \) be a formal parameter. 
The \textit{\( K \)-theoretic sum} \( \oplus \) is the binary operation defined by
\[
x \oplus y = x + y + \beta x y.
\]
When \( \beta = 0 \), this reduces to the usual addition \( + \). The additive inverse of \( x \) with respect to \( \oplus \) is given by
\[
\overline{x} = \frac{-x}{1 + \beta x}.
\]

Let \( x = (x_1, x_2, \dots) \) be an infinite sequence of variables, and let \( p_i = p_i(x) = x_1^i + x_2^i + \cdots \) denote the power sum.
The Schur \( Q \)-functions were introduced by Schur in 1911 to describe projective representations of the symmetric and alternating groups. These functions form a \( \mathbb{Q} \)-basis of the ring \( \mathbb{Q}[p_1, p_3, p_5, \dots] \), and they exhibit various algebraically interesting properties, such as a Pfaffian formula and a combinatorial expression via shifted Young diagrams.
The algebra \( \mathbb{Q}[p_1, p_3, p_5, \dots] \) is characterized as a $\QQ$-subalgebra of symmetric functions by the following \textit{\( Q \)-cancellation property}:
\begin{center}
\( f(t, -t, x_3, x_4, \dots) \) is independent of \( t \).
\end{center}

The \textit{\( K \)-\( Q \)-cancellation property} is a \( K \)-theoretic analogue of the \( Q \)-cancellation property. 
It serves as a defining condition for the subalgebra $G\Gamma$ (to be defined below) of the completed ring \( \widehat{\Lambda} \) of symmetric functions.

Let \( \Lambda \) be the \( \mathbb{Q}(\beta) \)-algebra of symmetric functions in \( x_1, x_2, \dots \), and let \( \mathcal{P} \) be the set of all partitions.
The algebra $\Lambda$ is generated by \textit{Schur functions} $s_\lambda(x)$ parametrized by a partition $\lambda\in \mathcal{P}$.
As a $\QQ(\beta)$-vector space, $\Lambda$ is decomposed as $\Lambda=\bigoplus_{\lambda\in \mathcal{P}}\QQ(\beta)\cdot s_\lambda(x)$.

For $\lambda\in \mathcal{P}$, let $\ell(\lambda)$ denote the length of $\lambda$.
Define the $\QQ(\beta)$-subspace
\[
U_n := \{ f(x) \in \Lambda \, ; \, f(x_1, x_2, \dots, x_n, 0, 0, \dots) = 0 \}
=\bigoplus_{\ell(\lambda)> n}\QQ(\beta)\cdot s_\lambda(x) ,
\]
and endow \( \Lambda \) with the linear topology where the family  \( \{ U_n \}_{n=1,2,\dots} \) forms a fundamental system of neighborhoods of $0$. The completion of \( \Lambda \) with respect to this topology is defined as a inverse limit
\( \widehat{\Lambda}=\lim\limits_{\longleftarrow n}(\Lambda/U_n) \) of the system
\[
\Lambda/U_1\leftarrow \Lambda/U_2\leftarrow \Lambda/U_3\leftarrow\cdots.
\]
%
Any element of \( \widehat{\Lambda} \) is uniquely expressed as an infinite sum
\begin{equation}\label{eq:elem_of_widehat_en}
\sum_{\lambda \in \mathcal{P}} c_\lambda s_\lambda(x), \qquad
\left(
\begin{aligned}
& c_\lambda \in \mathbb{Q}(\beta), \\
& \# \{\lambda\,|\,c_\lambda\neq 0,\,\ell(\lambda)=n\}<+\infty \text{ for each } n\geq 0.
\end{aligned}
\right)
\end{equation}

The following was introduced in \cite{IKEDA201322}:

\begin{dfn}[{\cite[Definition 1.1]{IKEDA201322}}]
A symmetric function \( f(x) \in \widehat{\Lambda} \) is said to satisfy the \textit{\( K \)-\( Q \)-cancellation property} if
\[
f(t, \overline{t}, x_3, x_4, \dots) \text{ is independent of } t.
\]
When \( \beta = 0 \), this condition reduces to the ordinary \( Q \)-cancellation property.
\end{dfn}

On the other hand, the dual version of this concept was introduced in \cite{iwao2022}:

\begin{dfn}[{\cite[\S 8]{iwao2022}}]
A function is said to satisfy the \textit{dual \( K \)-\( Q \)-cancellation property} if
\[
f(t, -t - \beta, x_3, x_4, \dots) \text{ is independent of } t.
\]
This condition also reduces to the \( Q \)-cancellation property when \( \beta = 0 \).
\end{dfn}

We define the rings \( G\Gamma \) and \( g\Gamma \) as follows:
\[
\begin{gathered}
G\Gamma := \left\{
f(x) \in \widehat{\Lambda} \;\middle|\;
\text{\( f \) satisfies the \( K \)-\( Q \)-cancellation property}
\right\}, \\
g\Gamma := \left\{
f(x) \in \Lambda \;\middle|\;
\text{\( f \) satisfies the dual \( K \)-\( Q \)-cancellation property}
\right\}.
\end{gathered}
\]

In~\cite{IKEDA201322}, Ikeda–Naruse defined the \( K \)-\( Q \)-functions \( GQ_\lambda \in G\Gamma \), and showed that they represent Schubert classes in the \( K \)-theory of the Lagrangian Grassmannian\footnote{The original paper~\cite{IKEDA201322} deals with a more general result in equivariant \( K \)-theory.}. 
The spaces $G\Gamma$ and $g\Gamma$ are related via a certain bilinear form, defined through the \textit{$K$-theoretic Cauchy kernel} (see \eqref{eq:K-cauchy} below) introduced by Nakagawa–Naruse~\cite[Definition 5.3]{nakagawa2016generating} (see also \cite[Definition 6.2]{nakagawa2017generating}).

Both \( G\Gamma \) and \( g\Gamma \) can be viewed as \( K \)-theoretic analogues of the ring \( \mathbb{Q}[p_1, p_3, p_5, \dots] \). These algebras admit explicit \( \mathbb{Q}(\beta) \)-bases constructed using the \(\beta\)-deformed power-sums \( p^{(\beta)}_n \), \( p^{[\beta]}_n \), and the \(\beta\)-deformed \( q \)-functions \( q^{(\beta)}_n \), \( q^{[\beta]}_n \), introduced in~\cite{iwao2022}.

This paper aims to present key structural features of \( G\Gamma \) and \( g\Gamma \), such as the determination of bases and finiteness properties, through purely elementary algebraic arguments.
The structure of this paper is as follows.
In Chapter~\ref{sec:KQ}, we describe the \( \mathbb{Q}(\beta) \)-algebra \( G\Gamma \), including its generators and basis. Every element of \( G\Gamma \) can be uniquely written as an infinite linear combination of either \(\beta\)-deformed power sums or \(\beta\)-deformed \( q \)-functions (Theorem~\ref{prop:expansion_of_GGamma}).
In Chapter~\ref{sec:dualKQ}, we construct an explicit \( \mathbb{Q}(\beta) \)-basis of \( g\Gamma \) (Theorem~\ref{prop:expansion_of_gGamma}) and establish a natural bilinear pairing between \( G\Gamma \) and \( g\Gamma \) induced by the Nakagawa–Naruse kernel (\S\ref{sec:dual_speces}).
In Chapter~\ref{eq:z-str_of_gGamma}, we investigate the integral structure of \( g\Gamma \) as a \( \mathbb{Z}[\beta] \)-algebra.

\section{Algebra of the \( K \)-\( Q \)-cancellation property}\label{sec:KQ}

\subsection{Preliminaries}

In this section, we introduce a \( \mathbb{Q}(\beta) \)-basis of \( G\Gamma \) (as a $\QQ(\beta)$-vector space), as well as a set of generators as a \( \mathbb{Q}(\beta) \)-algebra. 
Since elements of \( G\Gamma \) are generally expressed as infinite sums of the form~\eqref{eq:elem_of_widehat_en}, we consider a ``basis'' in the sense of infinite expansions rather than a Hamel basis in the usual sense (see Proposition~\ref{prop:expansion_of_GGamma} for the precise statement).

To deal with infinite sums, it is natural to consider an enlarged vector space that contains \( G\Gamma \) as a dense subspace. 
We define such an extension by including formal power series in the power sums \( p_i=p_i(x) \).

Consider the \( \mathbb{Q}(\beta) \)-vector space \( \widehat{\Lambda}_\sharp \) consisting of formal sums of the form
\[
\sum_{n=0}^\infty \sum_{
\substack{
|\lambda|=n
}
} c_\lambda\, p_{\lambda_1} p_{\lambda_2} \dots p_{\lambda_{\ell(\lambda)}}, \qquad (c_\lambda \in \mathbb{Q}(\beta)).
\]
The space \( \widehat{\Lambda}_\sharp \) admits a graded algebra structure where the degree of the function \( p_{\lambda_1} p_{\lambda_2} \dots p_{\lambda_{\ell(\lambda)}} \) is defined as \( |\lambda| \). 
The ring \( \widehat{\Lambda} \) is a subalgebra of \( \widehat{\Lambda}_\sharp \).
An element \( f(x) \in \widehat{\Lambda}_\sharp \) belongs to \( \widehat{\Lambda} \) if, for every \( N \geq 1 \), the specialization
\[
f(x_1, \dots, x_N, 0, 0, \dots)
\]
is a polynomial.

We now define a subalgebra \( G\Gamma_\sharp \subset \widehat{\Lambda}_\sharp \) by
\[
G\Gamma_\sharp := \left\{
f(x) \in \widehat{\Lambda}_\sharp \;\middle|\;
f \text{ satisfies the \( K \)-\( Q \)-cancellation property}
\right\}.
\]
From the definitions above, we have
\begin{equation}\label{eq:car_of_GGamma}
G\Gamma = G\Gamma_\sharp \cap \widehat{\Lambda}.
\end{equation}

\subsection{\( \mathbb{Q}(\beta) \)-basis of \( G\Gamma_\sharp \)}

The algebra \( G\Gamma_\sharp \) is a \( K \)-theoretic analogue of the algebra of Schur \( Q \)-functions
\[
\Gamma|_{\beta=0} := \left\{ f(x) \in \Lambda|_{\beta=0} \;\middle|\; 
f \text{ satisfies the \( Q \)-cancellation property} \right\}.
\]
It is known that $\Gamma|_{\beta=0}$ admits the expression
\begin{equation}\label{eq:Gamma_0}
\Gamma|_{\beta=0} = \mathbb{Q}[p_1, p_3, p_5, \dots],
\end{equation}
as shown in~\cite[\S III-8]{macbook}. 
Here, \( p_1, p_3, p_5, \dots \) are algebraically independent over \( \mathbb{Q} \). Since \eqref{eq:Gamma_0} is preserved under base change, we also have
\begin{equation}\label{eq:Gamma_beta}
\Gamma = \Gamma|_{\beta=0} \otimes_{\mathbb{Q}} \mathbb{Q}(\beta) = \mathbb{Q}(\beta)[p_1, p_3, p_5, \dots].
\end{equation}

\subsubsection{\( \beta \)-deformed power sums \( p_n^{(\beta)} \)}

We define the \textit{\( \beta \)-deformed power sum} \( p_n^{(\beta)} = p_n^{(\beta)}(x) \in \widehat{\Lambda}_\sharp \) by
\begin{equation}\label{eq:p^beta_vs_p}
p^{(\beta)}_n(x) = p_n\left( \frac{x}{1 + \frac{\beta}{2}x} \right)
= \sum_{k=0}^\infty \binom{-n}{k} \left( \frac{\beta}{2} \right)^k p_{n+k}(x),
\end{equation}
where \( p_n\left( \frac{x}{1 + \frac{\beta}{2}x} \right) \) denotes the element of \( \widehat{\Lambda} \) defined by
\[
p_n\left( \frac{x_1}{1 + \frac{\beta}{2}x_1}, \frac{x_2}{1 + \frac{\beta}{2}x_2}, \dots \right).
\]
When \( \beta = 0 \), the function \( p^{(\beta)}_n \) reduces to the usual power sum \( p_n \).

\begin{lemma}
If \( m \) is odd, then \( p_m^{(\beta)} \in G\Gamma_\sharp \).
\end{lemma}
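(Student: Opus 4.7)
The plan is to exploit the representation $p_n^{(\beta)}(x) = p_n(\phi(x))$ where $\phi$ denotes the coordinatewise substitution $x_i \mapsto \frac{x_i}{1+\frac{\beta}{2}x_i}$. The $K$-$Q$-cancellation property requires that $p_m^{(\beta)}(t,\overline{t},x_3,x_4,\dots)$ be independent of $t$, and under the substitution $\phi$, this pulls back to a standard power sum. So I would first verify the key algebraic identity that $\phi$ intertwines the $\oplus$-additive inverse $\overline{t} = \frac{-t}{1+\beta t}$ with the ordinary additive inverse.

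Concretely, setting $u = \phi(t) = \frac{t}{1+\frac{\beta}{2}t}$, I would compute directly
\[
\phi(\overline{t}) = \frac{\overline{t}}{1+\frac{\beta}{2}\overline{t}} = \frac{\frac{-t}{1+\beta t}}{1 + \frac{\beta}{2}\cdot\frac{-t}{1+\beta t}} = \frac{-t}{(1+\beta t) - \frac{\beta t}{2}} = \frac{-t}{1+\frac{\beta}{2}t} = -u,
\]
which is a short rational-function manipulation. This is the only substantive computation in the argument.

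Once this is established, the conclusion is immediate: using \eqref{eq:p^beta_vs_p},
\[
p_m^{(\beta)}(t,\overline{t},x_3,x_4,\dots) = \phi(t)^m + \phi(\overline{t})^m + \sum_{i\geq 3}\phi(x_i)^m = u^m + (-u)^m + \sum_{i\geq 3}\phi(x_i)^m,
\]
and since $m$ is odd the first two terms cancel, leaving an expression depending only on $x_3,x_4,\dots$. Thus $p_m^{(\beta)}$ satisfies the $K$-$Q$-cancellation property, so $p_m^{(\beta)} \in G\Gamma_\sharp$.

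I do not anticipate any real obstacle: the proof is essentially a one-line reduction to the classical fact that odd power sums satisfy the ordinary $Q$-cancellation property, provided one notices that the twist $\phi$ in the definition of $p_n^{(\beta)}$ is precisely the change of variables that converts $\overline{t}$ into $-u$. The only thing to be careful about is that $\phi$ is applied variable-by-variable and that $p_n$ is additive across variables, which is immediate from $p_n = \sum_i x_i^n$.
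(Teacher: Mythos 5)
Your proposal is correct and follows essentially the same route as the paper: substitute $x_1=t$, $x_2=\overline{t}$, verify that $\frac{\overline{t}}{1+\frac{\beta}{2}\overline{t}}=-\frac{t}{1+\frac{\beta}{2}t}$, and invoke the ordinary $Q$-cancellation of odd power sums. The rational-function computation you single out as the key step is exactly the one the paper performs.
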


\begin{proof}
Substituting \( x_1 = t \), \( x_2 = \overline{t} \) into~\eqref{eq:p^beta_vs_p}, we obtain
\[
p_m^{(\beta)}(t, \overline{t}, x_3, \dots) = p_m\left( 
\frac{t}{1 + \frac{\beta}{2}t},
\frac{\overline{t}}{1 + \frac{\beta}{2}\overline{t}},
\frac{x_3}{1 + \frac{\beta}{2}x_3}, \dots
\right).
\]
Since
\[
\frac{\overline{t}}{1 + \frac{\beta}{2} \overline{t}} = - \frac{t}{1 + \frac{\beta}{2} t},
\]
and \( p_m(T, -T, X_3, X_4, \dots) = p_m(X_3, X_4, \dots) \) for odd \( m \), we find that \( p_m^{(\beta)}(t, \overline{t}, x_3, \dots) \) is independent of \( t \).
\end{proof}

Let $\Gamma_\sharp$ be the \( \mathbb{Q}(\beta) \)-algebra
\[
\Gamma_\sharp := \left\{ f(x) \in \Lambda_\sharp \;\middle|\; f(t, -t, x_3, x_4, \dots) \text{ is independent of } t \right\}.
\]
By~\eqref{eq:Gamma_beta}, every element of \( \Gamma_\sharp \) can be uniquely expressed as an infinite sum of the form
\[
\sum_{n=0}^\infty \sum_{\substack{|\lambda| = n \\ \lambda_i \text{ odd}}} 
c_\lambda\, p_{\lambda_1} p_{\lambda_2} \dots p_{\lambda_{\ell(\lambda)}}, \qquad (c_\lambda \in \mathbb{Q}(\beta)).
\]
\begin{prop}\label{cor:p_isbasis_ofG}
The substitution \( x_i \mapsto \frac{x_i}{1 + \frac{\beta}{2} x_i} \) induces a \( \mathbb{Q}(\beta) \)-algebra isomorphism
\[
\Gamma_\sharp \to G\Gamma_\sharp.
\]
Each \( p_m \) (\( m \): odd) is sent to \( p_m^{(\beta)} \) under this map.
As a consequence, every element of \( G\Gamma_\sharp \) can be uniquely expressed as an infinite sum of elements in
\[
\mathbb{Q}(\beta)[p_1^{(\beta)}, p_3^{(\beta)}, p_5^{(\beta)}, \dots].
\]
In particular, \( p_1^{(\beta)}, p_3^{(\beta)}, p_5^{(\beta)}, \dots \) are algebraically independent over \( \mathbb{Q}(\beta) \).
\end{prop}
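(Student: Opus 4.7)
The plan is to produce a $\mathbb{Q}(\beta)$-algebra automorphism $\phi$ of $\widehat{\Lambda}_\sharp$ realized by the substitution $x_i\mapsto x_i/(1+\frac{\beta}{2}x_i)$, and then verify that it restricts to a bijection $\Gamma_\sharp\to G\Gamma_\sharp$. The expansion statement and the algebraic independence then follow by transporting the analogous facts for $\Gamma_\sharp$ recorded in \eqref{eq:Gamma_beta}.

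First I would check that $\phi$ is well-defined on $\widehat{\Lambda}_\sharp$. By \eqref{eq:p^beta_vs_p}, $\phi(p_n)=p_n^{(\beta)}$ is a formal sum of terms of total degree $\geq n$, so applying $\phi$ monomial-by-monomial to any element $\sum c_\lambda p_{\lambda_1}\cdots p_{\lambda_{\ell(\lambda)}}$ produces, for each fixed degree $d$, only finitely many contributions (namely from partitions with $|\lambda|\leq d$). Thus $\phi$ is a well-defined graded $\mathbb{Q}(\beta)$-algebra endomorphism of $\widehat{\Lambda}_\sharp$.

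For the inverse, let $\psi$ be the substitution $x_i\mapsto x_i/(1-\frac{\beta}{2}x_i)$, which by the same argument is a well-defined endomorphism. A one-line computation gives
\[
\frac{\frac{x}{1+\frac{\beta}{2}x}}{1-\frac{\beta}{2}\cdot\frac{x}{1+\frac{\beta}{2}x}}=x,
\]
so $\psi\circ\phi=\mathrm{id}$ on generators; symmetrically $\phi\circ\psi=\mathrm{id}$. Hence $\phi$ is a $\mathbb{Q}(\beta)$-algebra automorphism of $\widehat{\Lambda}_\sharp$ sending $p_m$ to $p_m^{(\beta)}$.

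Next I would show that $\phi(\Gamma_\sharp)=G\Gamma_\sharp$. Given $f\in\Gamma_\sharp$, setting $T=t/(1+\frac{\beta}{2}t)$ we have $\overline{t}/(1+\frac{\beta}{2}\overline{t})=-T$ as computed in the proof of the preceding lemma, so
\[
\phi(f)(t,\overline{t},x_3,\dots)=f\bigl(T,-T,\tfrac{x_3}{1+\frac{\beta}{2}x_3},\dots\bigr)
\]
is independent of $T$ (hence of $t$) by the ordinary $Q$-cancellation property. Conversely, for $g\in G\Gamma_\sharp$, setting $T'=t/(1-\frac{\beta}{2}t)$ one checks directly that $\overline{T'}=-t/(1+\frac{\beta}{2}t)$, so
\[
\psi(g)(t,-t,x_3,\dots)=g\bigl(T',\overline{T'},\tfrac{x_3}{1-\frac{\beta}{2}x_3},\dots\bigr)
\]
is independent of $t$, giving $\psi(g)\in\Gamma_\sharp$. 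Thus $\phi$ restricts to a $\mathbb{Q}(\beta)$-algebra isomorphism $\Gamma_\sharp\xrightarrow{\sim}G\Gamma_\sharp$.

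Finally, the description of $\Gamma_\sharp$ inherited from \eqref{eq:Gamma_beta} says that every element of $\Gamma_\sharp$ has a unique expression as an infinite sum of monomials in the algebraically independent generators $p_1,p_3,p_5,\dots$. Applying the isomorphism $\phi$ transports both the uniqueness of the expansion and the algebraic independence to the images $p_1^{(\beta)},p_3^{(\beta)},p_5^{(\beta)},\dots$, yielding the two remaining assertions. The only nontrivial point I anticipate is bookkeeping the topologies to confirm that $\phi$ is continuous and that ``unique infinite expansion'' really transports across $\phi$; everything else reduces to the mutual inverse identity between the two substitutions.
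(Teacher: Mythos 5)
Your proposal is correct and follows essentially the same route the paper intends: the paper states this proposition without a separate proof, treating it as an immediate consequence of the substitution argument, whose key computation $\overline{t}/(1+\tfrac{\beta}{2}\overline{t})=-t/(1+\tfrac{\beta}{2}t)$ already appears in the proof of the preceding lemma. Your explicit verification of the inverse substitution $x_i\mapsto x_i/(1-\tfrac{\beta}{2}x_i)$ and of both directions of the cancellation-property correspondence just fills in the details the paper leaves implicit.
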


\subsubsection{\( \beta \)-deformed \( q \)-functions \( q_n^{(\beta)} \)}

Another important family contained in \( G\Gamma_\sharp \) is the \textit{\( \beta \)-deformed \( q \)-functions} \( q_n^{(\beta)} = q_n^{(\beta)}(x) \). These are defined via the generating function
\begin{equation}\label{eq:pbeta_to_qbeta}
\begin{aligned}
Q^{(\beta)}(z) = \sum_{n=0}^\infty q_n^{(\beta)}(x) z^n 
= \prod_i \frac{1 - \overline{x_i} z}{1 - x_i z},
\end{aligned}
\end{equation}
and can be regarded as \( K \)-theoretic analogues of the Schur \( q \)-functions (see~\cite[\S III-8]{macbook}).

\begin{lemma}
For any \( n>0 \), we have \( q_n^{(\beta)} \in G\Gamma_\sharp \).
\end{lemma}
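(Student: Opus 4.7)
The natural plan is to work with the generating function \( Q^{(\beta)}(z) \) as a whole, since the $K$-$Q$-cancellation property is linear in the function and will therefore follow coefficient-wise in $z$ once it is established for $Q^{(\beta)}(z)$ itself. So I would substitute \( x_1 = t \) and \( x_2 = \overline{t} \) directly into the infinite product
\[
Q^{(\beta)}(z) = \prod_i \frac{1 - \overline{x_i} z}{1 - x_i z}
\]
and inspect what happens to the first two factors.

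The \( i=1 \) and \( i=2 \) factors become
\[
\frac{1 - \overline{t}\, z}{1 - t z} \cdot \frac{1 - \overline{\overline{t}}\, z}{1 - \overline{t}\, z}.
\]
The key observation is that \( \overline{\,\cdot\,} \) is an involution, i.e.\ \( \overline{\overline{t}} = t \); this is a direct computation from \( \overline{t} = -t/(1+\beta t) \). With this in hand, the two factors collapse to
\[
\frac{1 - \overline{t}\, z}{1 - t z} \cdot \frac{1 - t z}{1 - \overline{t}\, z} = 1,
\]
so the specialization \( Q^{(\beta)}(z)\big|_{x_1 = t,\, x_2 = \overline{t}} \) equals \( \prod_{i \geq 3} \frac{1 - \overline{x_i}\, z}{1 - x_i z} \), which visibly does not depend on \( t \).

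Expanding both sides in powers of \( z \) then shows that each coefficient \( q_n^{(\beta)}(t, \overline{t}, x_3, x_4, \dots) \) is independent of \( t \), which is exactly the \( K \)-\( Q \)-cancellation property for every \( q_n^{(\beta)} \). I do not anticipate any serious obstacle: the only non-trivial point is the involutivity \( \overline{\overline{t}} = t \), and even that is a short direct calculation. One minor formality worth checking is that the substitution is legitimate within \( \widehat{\Lambda}_\sharp \) (i.e.\ that the infinite product makes sense after specialization), but this is immediate because the product is already taken coefficient-wise in \( z \) and the specialized factors for \( i \geq 3 \) are unchanged.
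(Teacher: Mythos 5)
Your proposal is correct and follows essentially the same route as the paper: substitute \(x_1=t\), \(x_2=\overline{t}\) into the generating function \eqref{eq:pbeta_to_qbeta}, observe that the first two factors cancel (the paper uses \(\overline{\overline{t}}=t\) implicitly, which you rightly verify), and conclude coefficient-wise in \(z\). No gaps.
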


\begin{proof}
Substituting \( x_1 = t \), \( x_2 = \overline{t} \) into~\eqref{eq:pbeta_to_qbeta}, we obtain
\[
\sum_{n=0}^\infty q_n^{(\beta)}(t, \overline{t}, x_3, \dots) z^n
= \frac{1 - \overline{t} z}{1 - t z} \cdot \frac{1 - t z}{1 - \overline{t} z} \cdot 
\prod_{i=3}^\infty \frac{1 - \overline{x_i} z}{1 - x_i z}
= \prod_{i=3}^\infty \frac{1 - \overline{x_i} z}{1 - x_i z}.
\]
Since this expression is independent of \( t \), we find that \( q_n^{(\beta)} \) satisfies the \( K \)-\( Q \)-cancellation property.
\end{proof}

We define the function \( \overline{q}_n^{(\beta)} \) by
\[
\overline{q}_n^{(\beta)}(x) := q_n^{(\beta)}(\overline{x}),
\]
which is clearly also an element of \( G\Gamma_\sharp \).
If $\beta=0$, we have $q_n^{(0)}=\overline{q}_n^{(0)}=q_n$.

\begin{lemma}\label{lemma:involution}
The substitution \( x_i \mapsto \overline{x}_i \) defines a \( \mathbb{Q}(\beta) \)-algebra involution on \( G\Gamma_\sharp \), under which
\[
p_m^{(\beta)} \leftrightarrow -p_m^{(\beta)}, \quad
q_n^{(\beta)} \leftrightarrow \overline{q}_n^{(\beta)}, \qquad (m\text{ odd},\; n \geq 1).
\]
\end{lemma}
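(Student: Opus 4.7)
The strategy is to let $\iota$ denote the substitution $x_i \mapsto \overline{x}_i$ and prove three things in sequence: (i) $\iota$ defines a $\mathbb{Q}(\beta)$-algebra endomorphism of $\widehat{\Lambda}_\sharp$ squaring to the identity; (ii) $\iota$ preserves the $K$-$Q$-cancellation property; (iii) $\iota$ acts on the named generators as stated.

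For (i), since $\overline{x} = -x/(1+\beta x) = -x + \beta x^2 - \beta^2 x^3 + \cdots$ lies in $x\cdot\mathbb{Q}(\beta)[[x]]$, each $p_i(\overline{x})$ expands as a well-defined element of $\widehat{\Lambda}$, namely $(-1)^i\sum_{k\geq 0}\binom{-i}{k}\beta^k p_{i+k}(x)$, so the substitution extends uniquely to a $\mathbb{Q}(\beta)$-algebra endomorphism of $\widehat{\Lambda}_\sharp$. A direct computation gives $\overline{\overline{x}} = x$, hence $\iota^2 = \mathrm{id}$.

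For (ii), fix $f \in G\Gamma_\sharp$. Then
\[
(\iota f)(t, \overline{t}, x_3, x_4, \ldots) = f(\overline{t}, \overline{\overline{t}}, \overline{x}_3, \overline{x}_4, \ldots) = f(\overline{t}, t, \overline{x}_3, \overline{x}_4, \ldots) = f(t, \overline{t}, \overline{x}_3, \overline{x}_4, \ldots),
\]
where the second equality uses $\overline{\overline{t}} = t$ and the third uses the symmetry of $f$. Applying the $K$-$Q$-cancellation property of $f$ with remaining variables specialized to $\overline{x}_3, \overline{x}_4, \ldots$ shows this is independent of $t$, so $\iota(G\Gamma_\sharp) \subseteq G\Gamma_\sharp$. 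Combined with $\iota^2 = \mathrm{id}$, this makes $\iota$ an involution of $G\Gamma_\sharp$.

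For (iii), the image $\iota(q_n^{(\beta)}) = \overline{q}_n^{(\beta)}$ is simply the definition of $\overline{q}_n^{(\beta)}$, and the reverse arrow follows from $\iota^2 = \mathrm{id}$. For $p_m^{(\beta)}$, clearing denominators yields the identity
\[
\frac{\overline{x}}{1 + \frac{\beta}{2}\overline{x}} = -\frac{x}{1 + \frac{\beta}{2}x};
\]
substituting into \eqref{eq:p^beta_vs_p} and using that $p_m$ is homogeneous of degree $m$ gives $\iota(p_m^{(\beta)}) = (-1)^m p_m^{(\beta)}$, which is $-p_m^{(\beta)}$ for odd $m$. I expect no serious obstacle; the only point requiring care is the verification in (i) that $\iota$ is well-defined on the completion $\widehat{\Lambda}_\sharp$, and the crucial algebraic input in (ii) is the symmetry of $f$ together with $\overline{\overline{t}}=t$.
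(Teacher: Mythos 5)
Your proof is correct and follows essentially the same route as the paper, whose own proof is a one-line remark that the two relations follow from \eqref{eq:p^beta_vs_p} and from the definition of \( \overline{q}_n^{(\beta)} \). The extra details you supply --- well-definedness of the substitution on \( \widehat{\Lambda}_\sharp \), stability of \( G\Gamma_\sharp \) via \( \overline{\overline{t}}=t \) and symmetry, and the identity \( \overline{x}/(1+\tfrac{\beta}{2}\overline{x})=-x/(1+\tfrac{\beta}{2}x) \) --- are exactly the points the paper leaves implicit (the last one already appears in its proof that \( p_m^{(\beta)}\in G\Gamma_\sharp \)), and all of your computations check out.
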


\begin{proof}
The first relation follows from~\eqref{eq:p^beta_vs_p}, and the second from the definition of \( \overline{q}_n^{(\beta)} \).
\end{proof}

\begin{lemma}\label{lemma:func_rel_Q}
Let \( \overline{Q}^{(\beta)}(z) = \sum_{n=0}^\infty \overline{q}_n^{(\beta)} z^n \) be the generating function of \( \overline{q}_n^{(\beta)} \). Then the following functional equations hold:
\begin{gather}
Q^{(\beta)}(z) \overline{Q}^{(\beta)}(z) = 1, \qquad
\overline{Q}^{(\beta)}(z) = \frac{Q^{(\beta)}(-z - \beta)}{Q^{(\beta)}(-\beta)}.
\end{gather}
\end{lemma}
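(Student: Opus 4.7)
The first identity is essentially a tautology from the product definition. Since $\overline{\overline{x}}=x$, the substitution $x_i\mapsto\overline{x}_i$ sends the factor $\frac{1-\overline{x}_i z}{1-x_i z}$ to $\frac{1-x_i z}{1-\overline{x}_i z}$, so
\[
Q^{(\beta)}(z)\,\overline{Q}^{(\beta)}(z)=\prod_i\frac{1-\overline{x}_i z}{1-x_i z}\cdot\frac{1-x_i z}{1-\overline{x}_i z}=1,
\]
with the products telescoping factor by factor. So this step is immediate.

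For the second identity, my plan is to compute both $Q^{(\beta)}(-z-\beta)$ and $Q^{(\beta)}(-\beta)$ from the product formula, take the ratio, and check that each factor reduces to the corresponding factor of $\overline{Q}^{(\beta)}(z)=\prod_i\frac{1-x_iz}{1-\overline{x}_iz}$. The key computational ingredient is the algebra of the bar operation: from $\overline{x}=\frac{-x}{1+\beta x}$ one reads off the two identities
\[
1+\overline{x}_i\beta=\frac{1}{1+\beta x_i},\qquad 1+\overline{x}_i(z+\beta)=\frac{1-x_iz}{1+\beta x_i},
\]
and dually $1-\overline{x}_iz=\frac{1+x_i(z+\beta)}{1+\beta x_i}$. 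Using these, the generic factor of the ratio becomes
\[
\frac{\bigl(1+\overline{x}_i(z+\beta)\bigr)\bigl(1+x_i\beta\bigr)}{\bigl(1+x_i(z+\beta)\bigr)\bigl(1+\overline{x}_i\beta\bigr)}
=\frac{(1-x_iz)(1+\beta x_i)\,\cdot\,(1+\beta x_i)}{(1+\beta x_i)\bigl(1+x_i(z+\beta)\bigr)}
=\frac{1-x_iz}{1-\overline{x}_iz},
\]
and taking the infinite product over $i$ yields the claimed equality.

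The only place where care is needed is to make sure the substitution of $-z-\beta$ for $z$ in $Q^{(\beta)}(z)$ is well defined as an element of $\widehat{\Lambda}_\sharp[\![z]\!]$ (or rather, in the appropriate completion). This is not a genuine obstacle: the denominator $Q^{(\beta)}(-\beta)=\prod_i\frac{1+\overline{x}_i\beta}{1+x_i\beta}=\prod_i\frac{1}{(1+\beta x_i)^2}$ is a unit in the completed power-sum algebra (it has constant term $1$), and the formal expansion of $Q^{(\beta)}(-z-\beta)$ in $z$ makes sense once one expands each factor $\frac{1+\overline{x}_i(z+\beta)}{1+x_i(z+\beta)}$ as a power series in $z$ with coefficients that are rational functions (in fact, polynomials after clearing denominators against $Q^{(\beta)}(-\beta)$). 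With the factor-by-factor identity above, both sides are equal as elements of $G\Gamma_\sharp[\![z]\!]$, which completes the proof.
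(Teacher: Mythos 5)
Your proof is correct and follows essentially the same route as the paper: the first identity comes from the telescoping product (using $\overline{\overline{x}}=x$), and the second is the same factor-by-factor computation of $Q^{(\beta)}(-z-\beta)$ via the identities $1+\overline{x}_i(z+\beta)=\frac{1-x_iz}{1+\beta x_i}$ and $1+\beta\overline{x}_i=\frac{1}{1+\beta x_i}$, just organized as a ratio rather than as the paper's product form $Q^{(\beta)}(-z-\beta)=Q^{(\beta)}(-\beta)\,\overline{Q}^{(\beta)}(z)$. The added remark on well-definedness of the substitution $z\mapsto -z-\beta$ is a harmless bonus.
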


\begin{proof}
The first equation follows directly from~\eqref{eq:pbeta_to_qbeta}. For the second, we compute:
\[
\begin{aligned}
Q^{(\beta)}(-z - \beta)
&= \prod_i \frac{1 - \overline{x_i}(-z - \beta)}{1 - x_i(-z - \beta)} \\
&= \prod_i \frac{1 + \frac{x_i}{1 + \beta x_i}(z + \beta)}{1 + x_i(z + \beta)} \\
&= \prod_i \frac{1 - \beta \cdot \frac{x_i}{1 + \beta x_i}}{1 + \beta x_i}
\cdot \frac{1 - x_i z}{1 - \overline{x_i} z} \\
&= \prod_i \frac{1 + \beta \overline{x_i}}{1 + \beta x_i}
\cdot \frac{1 - x_i z}{1 - \overline{x_i} z} \\
&= Q^{(\beta)}(-\beta) \cdot \overline{Q}^{(\beta)}(z).
\end{aligned}
\]
\end{proof}

From the identities in Lemma~\ref{lemma:func_rel_Q}, we obtain the recurrence relation
\begin{equation}\label{eq:G_base_relation}
q_n^{(\beta)} + q_{n-1}^{(\beta)} \overline{q}_1^{(\beta)} + q_{n-2}^{(\beta)} \overline{q}_2^{(\beta)} + \cdots + \overline{q}_n^{(\beta)} = 0, \qquad (n > 0),
\end{equation}
as well as the expression
\begin{equation}\label{eq:ovq_to_q}
\overline{q}_k^{(\beta)} = (-1)^k \cdot \frac{
\sum_{l=0}^\infty \binom{k+l}{k} (-\beta)^l q_{k+l}^{(\beta)}
}{
\sum_{l=0}^\infty (-\beta)^l q_l^{(\beta)}
}.
\end{equation}

The denominator in~\eqref{eq:ovq_to_q} expands as
\begin{equation}\label{eq:q0_expansion}
\left(1 + \sum_{l=1}^\infty (-\beta)^l q_l^{(\beta)} \right)^{-1}
= 1 + \beta q_1^{(\beta)} + \beta^2\left\{(q_1^{(\beta)})^2 - q_2^{(\beta)}\right\} + \cdots,
\end{equation}
from which we deduce that \( \overline{q}_k^{(\beta)} \) has the expansion
\begin{equation}\label{eq:overlineq_to_q}
\overline{q}_k^{(\beta)} = (-1)^k q_k^{(\beta)} + \sum_{\substack{
i = (i_1 \geq i_2 \geq \dots \geq i_m) \\ |i| > k
}} \beta^{|i| - k} c_i \cdot q_{i_1}^{(\beta)} q_{i_2}^{(\beta)} \cdots q_{i_m}^{(\beta)},
\end{equation}
where \( c_i \in \mathbb{Z} \) and \( |i| := \sum_k i_k \).

Now, substituting \( n \mapsto 2n \) into~\eqref{eq:G_base_relation} and inserting~\eqref{eq:overlineq_to_q}, we can isolate the term \( q_{2n}^{(\beta)} \) and obtain:
\begin{equation}\label{eq:zenkasiki}
\begin{aligned}
q_{2n}^{(\beta)} &=
q_{2n-1}^{(\beta)} q_1^{(\beta)} - q_{2n-2}^{(\beta)} q_2^{(\beta)} + \cdots
+ (-1)^n q_{n+1}^{(\beta)} q_{n-1}^{(\beta)} - (-1)^n \frac{(q_n^{(\beta)})^2}{2} \\
& \quad + \frac{1}{2} \sum_{|i| > 2n} \beta^{|i| - 2n} C_i \cdot q_{i_1}^{(\beta)} q_{i_2}^{(\beta)} \cdots q_{i_m}^{(\beta)},
\end{aligned}
\end{equation}
where \( C_i \in \mathbb{Z} \).

\subsubsection{Expansion by odd partitions}\label{sec:expansion_by_odd}

By repeatedly applying the recurrence relation~\eqref{eq:zenkasiki}, one can express \( q_{2n}^{(\beta)} \) in terms of \( q_1^{(\beta)}, q_3^{(\beta)}, q_5^{(\beta)}, \dots \). We now provide a proof of this fact.

We introduce an ordering \( \succ \) on monomials in the \( q_n^{(\beta)} \) as follows:
\[
\begin{aligned}
& q_{k_1}^{(\beta)} q_{k_2}^{(\beta)} \dots q_{k_p}^{(\beta)} 
\succ 
q_{l_1}^{(\beta)} q_{l_2}^{(\beta)} \dots q_{l_r}^{(\beta)} 
\qquad (k_1 \geq k_2 \geq \cdots,\; l_1 \geq l_2 \geq \cdots)
\\
& \iff
\begin{cases}
|k| > |l|,\quad \text{or} \\
|k| = |l| \text{ and } (k_1, k_2, \dots) 
\text{ is lexicographically smaller than } (l_1, l_2, \dots)
\end{cases}
\end{aligned}
\]
With respect to this ordering, every monomial appearing on the right-hand side of~\eqref{eq:zenkasiki} is strictly greater than \( q_{2n}^{(\beta)} \). Thus, any monomial involving any of \( q_2^{(\beta)}, q_4^{(\beta)}, q_6^{(\beta)}, \dots \) can always be rewritten as a linear combination of monomials greater in this order.

Since \( \succ \) is a total order and the set of monomials in the \( q_n^{(\beta)} \) forms a well-ordered set under \( \succ \), this replacement procedure can be iterated. In this way, one can eliminate all even-indexed \( q_{2k}^{(\beta)} \) from the right-hand side of~\eqref{eq:zenkasiki}, and write \( q_{2n}^{(\beta)} \) as an \textit{infinite sum} of elements in \( \mathbb{Q}[\beta][q_1^{(\beta)}, q_3^{(\beta)}, q_5^{(\beta)}, \dots] \).

\begin{remark}
We do not have \( q_{2n}^{(\beta)} \in \mathbb{Q}[\beta][q_1^{(\beta)}, q_3^{(\beta)}, q_5^{(\beta)}, \dots] \); the word ``\textit{infinite sum}'' above cannot be improved as finite sum.
In fact, when \( x_1 = x \) and \( x_2 = x_3 = \cdots = 0 \), we have
\[
q_n^{(\beta)} = x^{n-1}(x - \overline{x}) = x^n \cdot \frac{2 + \beta x}{1 + \beta x}.
\]
However, \( q_{2n}^{(\beta)} \) cannot be written as a polynomial in \( q_1^{(\beta)}, q_3^{(\beta)}, q_5^{(\beta)}, \dots \) with coefficients in \( \mathbb{Q}[\beta] \).\footnote{
(Proof.) It suffices to show \( x^{2n} \cdot \frac{2 + \beta x}{1 + \beta x} \notin \mathbb{Q}(\beta)[x \cdot \frac{2 + \beta x}{1 + \beta x}, x^2] \). 
Consider the substitution \( x \mapsto \sqrt{2}/\beta \), which defines a map
$
\mathbb{Q}(\beta)[x \cdot \tfrac{2 + \beta x}{1 + \beta x}, x^2] \to \mathbb{Q}(\beta)(\sqrt{2})
$.
Obviously, the image of this map is contained in \( \mathbb{Q}(\beta) \subsetneq \mathbb{Q}(\beta)(\sqrt{2}) \).
However, the value of \( x^{2n} \cdot \frac{2 + \beta x}{1 + \beta x} \) at \( x = \sqrt{2}/\beta \) equals
\[
\sqrt{2} \cdot (2/\beta)^{2n},
\] 
which is not an element of $\mathbb{Q}(\beta)$.
Hence, we conclude \( x^{2n} \cdot \frac{2 + \beta x}{1 + \beta x} \notin \mathbb{Q}(\beta)[x \cdot \frac{2 + \beta x}{1 + \beta x}, x^2] \).
}
\end{remark}

For a partition \( \lambda \), define
\[
p_\lambda^{(\beta)} := p_{\lambda_1}^{(\beta)} p_{\lambda_2}^{(\beta)} \cdots, \qquad
q_\lambda^{(\beta)} := q_{\lambda_1}^{(\beta)} q_{\lambda_2}^{(\beta)} \cdots.
\]
Let \( \mathcal{OP} \) denote the set of \emph{odd partitions} (partitions whose parts are all odd), and \( \mathcal{SP} \) denote the set of \emph{strict partitions} (partitions with all parts distinct).

Then the above claim can be restated as follows:

\begin{lemma}\label{eq:expansion_by_odd}
For any partition \( \lambda \in \mathcal{P} \), the function \( q_\lambda^{(\beta)} \) can be written as an infinite \( \mathbb{Q}[\beta] \)-linear combination of \( q_\mu^{(\beta)} \) for \( \mu \in \mathcal{OP} \).
\end{lemma}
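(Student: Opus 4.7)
The strategy is to use \eqref{eq:zenkasiki} as an oriented rewrite rule on the monomials $q_\mu^{(\beta)}$, show that it strictly increases the ordering $\succ$, and argue via well-ordering within each $x$-degree truncation that iterating the rule converges in $\widehat{\Lambda}_\sharp$ to an expansion of $q_\lambda^{(\beta)}$ supported on $\mathcal{OP}$. First, I would upgrade \eqref{eq:zenkasiki} to a rewrite rule on arbitrary $q_\lambda^{(\beta)}$ with an even part $2n$: write $\lambda = \{2n\} \sqcup \lambda'$ and multiply \eqref{eq:zenkasiki} by $q_{\lambda'}^{(\beta)}$, obtaining an identity that expresses $q_\lambda^{(\beta)}$ as a $\mathbb{Q}[\beta]$-linear combination of $q_\mu^{(\beta)}$ with $\mu = \nu \sqcup \lambda'$, where $\nu$ ranges over partitions of $2n$ distinct from $(2n)$ (finite part) together with partitions of size $> 2n$ ($\beta$-correction). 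In each case $\mu \succ \lambda$: let $i^*$ denote the position of the last occurrence of $2n$ in the sorted $\lambda$. The parts $\geq 2n$ of $\mu$ are precisely $\lambda_1, \dots, \lambda_{i^* - 1}$, since every other contributing part (from $\lambda$ after position $i^*$, or from $\nu$ in the finite case) is strictly less than $2n$; hence $\mu$ agrees with $\lambda$ through position $i^* - 1$ and $\mu_{i^*} < 2n = \lambda_{i^*}$, making $\mu$ lex-smaller (hence $\succ$-greater) of the same size in the finite case, while $|\mu| > |\lambda|$ gives $\mu \succ \lambda$ directly in the $\beta$-correction case.

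Next I would iterate. Starting from $f_0 := q_\lambda^{(\beta)}$, define $(f_i)_{i \geq 0}$ by replacing, at each step, one monomial $q_\mu^{(\beta)}$ appearing in $f_i$ with $\mu$ containing an even part by the right-hand side of the generalized recurrence; by construction $f_i = q_\lambda^{(\beta)}$ in $\widehat{\Lambda}_\sharp$ for every $i$. To show degree-wise stabilization, fix $N \geq |\lambda|$ and observe that $\mathcal{P}_{\leq N} := \{\mu \in \mathcal{P} : |\mu| \leq N\}$ is finite, so $\succ$ restricts to a well-order on it. Each rewrite of a $q_\mu^{(\beta)}$ with $|\mu| \leq N$ produces $q_{\mu'}^{(\beta)}$'s that are either in $\mathcal{P}_{\leq N}$ with $\mu' \succ \mu$ or have $|\mu'| > N$; a well-founded multiset-ordering argument (Dershowitz--Manna) then shows that after finitely many steps no even-part $q_\mu^{(\beta)}$ with $|\mu| \leq N$ remains in $f_i$. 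Since $q_\mu^{(\beta)}$ contributes only to $x$-degrees $\geq |\mu|$, the $x$-degree-$\leq N$ component of $f_i$ is thereafter constant, so $\lim_i f_i$ exists in $\widehat{\Lambda}_\sharp$, equals $q_\lambda^{(\beta)}$, and has the desired form $\sum_{\mu \in \mathcal{OP}} c_\mu q_\mu^{(\beta)}$ with $c_\mu \in \mathbb{Q}[\beta]$.

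The main technical hurdle is the lex comparison in the first step, which hinges on selecting the last occurrence of $2n$ in $\lambda$ to ensure that no part of $\mu$ at position $\geq i^*$ equals $2n$; choosing, say, the first occurrence would allow duplicate $2n$'s further down in $\lambda$ to match $\mu$ past $i^*$ and spoil the comparison. The factors $\tfrac{1}{2}$ appearing in \eqref{eq:zenkasiki} explain why $c_\mu \in \mathbb{Q}[\beta]$ rather than $\mathbb{Z}[\beta]$ in general. Once the compatibility of $\succ$ with the generalized rewrite is in hand, termination and convergence run routinely.
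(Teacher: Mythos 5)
Your proposal is correct and follows essentially the same route as the paper: the paper's argument in \S\ref{sec:expansion_by_odd} is precisely to use \eqref{eq:zenkasiki} as a rewrite rule that strictly increases the order $\succ$ and to iterate it using well-ordering, which is what you do. Your additional details --- the explicit lex comparison after multiplying by $q_{\lambda'}^{(\beta)}$ (via the last occurrence of the even part) and the degree-truncated multiset termination argument --- are correct elaborations of steps the paper leaves implicit.
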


\subsubsection{Expansion by strict partitions}

By modifying the argument in \S\ref{sec:expansion_by_odd}, we can also prove the following statement concerning strict partitions.

\begin{lemma}\label{eq:expansion_by_strict}
For any partition \( \lambda \in \mathcal{P} \), the function \( q_\lambda^{(\beta)} \) can be written as an infinite \( \mathbb{Z}[\beta] \)-linear combination of \( q_\mu^{(\beta)} \) with \( \mu \in \mathcal{SP} \).
\end{lemma}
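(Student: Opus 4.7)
The strategy parallels that of Lemma~\ref{eq:expansion_by_odd}: derive a recurrence that expresses the ``forbidden'' feature---now a repeated part $(q_k^{(\beta)})^2$ rather than an even-indexed generator---in terms of $q_\nu^{(\beta)}$ for strict $\nu$ plus a $\beta$-correction of strictly higher $x$-degree, with $\mathbb{Z}[\beta]$-coefficients, and then iterate under a suitable well-ordering.

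The key recurrence comes from the identity $Q^{(\beta)}(z)\overline{Q}^{(\beta)}(z) = 1$ of Lemma~\ref{lemma:func_rel_Q}, whose coefficient of $z^{2k}$ reads $\sum_{j=0}^{2k} q_j^{(\beta)} \overline{q}_{2k-j}^{(\beta)} = 0$ for $k \geq 1$. Expanding each $\overline{q}_m^{(\beta)}$ via~\eqref{eq:overlineq_to_q} (whose coefficients are integers), pairing $j$ with $2k-j$, and isolating the central contribution $q_k^{(\beta)} \overline{q}_k^{(\beta)} = (-1)^k (q_k^{(\beta)})^2 + (\text{higher $\beta$-terms})$, I would obtain
\[
(q_k^{(\beta)})^2 = 2(-1)^{k+1} q_{2k}^{(\beta)} + 2\sum_{j=1}^{k-1}(-1)^{k+j+1}\, q_j^{(\beta)} q_{2k-j}^{(\beta)} + \sum_{|i|>2k} \beta^{|i|-2k} D_i\, q_{i_1}^{(\beta)} q_{i_2}^{(\beta)} \cdots,
\]
with $D_i \in \mathbb{Z}$. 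The $\beta^0$-portion on the right is already a $\mathbb{Z}$-linear combination of $q_\nu^{(\beta)}$ for strict $\nu$ of size $2k$, since $(2k)$ is strict and $(2k-j,j)$ is strict for $1 \leq j \leq k-1$; each $\beta$-correction term has $x$-degree strictly greater than $2k$.

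The elimination then proceeds as in \S\ref{sec:expansion_by_odd}. I would introduce the ordering $\succ'$ defined by $(k_1,k_2,\ldots) \succ' (l_1,l_2,\ldots)$ iff $|k|>|l|$, or $|k|=|l|$ and $(k_1,\ldots)$ is lexicographically \emph{larger} than $(l_1,\ldots)$; with respect to $\succ'$, each of $(2k),(2k-1,1),\ldots,(k+1,k-1)$ is strictly greater than $(k,k)$, and any monomial of size $>2k$ is automatically $\succ'$-greater. Given any non-strict $q_\lambda^{(\beta)}$, factor out some $(q_k^{(\beta)})^2$ corresponding to a repeated part of $\lambda$ and apply the recurrence; this rewrites $q_\lambda^{(\beta)}$ as a $\mathbb{Z}[\beta]$-combination of $\succ'$-strictly greater monomials. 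Since at each fixed $x$-degree there are only finitely many monomials totally ordered by $\succ'$, iterating the procedure eliminates every non-strict monomial of $x$-degree $\leq N$ after finitely many steps, leaving a finite $\mathbb{Z}[\beta]$-linear combination of strict $q_\mu^{(\beta)}$ of $x$-degree $\leq N$ plus a remainder of $x$-degree $>N$. Letting $N \to \infty$ produces the claimed infinite sum in $\widehat{\Lambda}_\sharp$.

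The main obstacle will be controlling new repeated parts that the recurrence itself can introduce: for instance, $q_{2k}^{(\beta)} (q_k^{(\beta)})^2 \mapsto 2(-1)^{k+1} (q_{2k}^{(\beta)})^2 + \cdots$ resurrects a square at the same $x$-degree. The $\succ'$-induction is tailored precisely to this obstacle, since each reintroduced non-strict monomial is strictly $\succ'$-greater than the one it came from at the same $x$-degree, and the finiteness of monomials at a given degree guarantees that the cascade of substitutions terminates.
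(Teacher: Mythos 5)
Your proposal is correct and follows essentially the same route as the paper: you rederive the rearranged recurrence~(\ref{eq:zenkasiki}') for \( (q_k^{(\beta)})^2 \) with \( \mathbb{Z}[\beta] \)-coefficients and eliminate repeated parts by induction on the same order \( \succ' \). Your explicit discussion of termination and of squares reintroduced by the substitution is a welcome elaboration of a step the paper leaves implicit, but it is not a different method.
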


\begin{proof}
We introduce a different total order \( \succ' \) on monomials in the \( q_n^{(\beta)} \) as follows:
\[
\begin{aligned}
& q_{k_1}^{(\beta)} q_{k_2}^{(\beta)} \dots q_{k_p}^{(\beta)} 
\succ' 
q_{l_1}^{(\beta)} q_{l_2}^{(\beta)} \dots q_{l_r}^{(\beta)} 
\qquad (k_1 \geq k_2 \geq \cdots,\; l_1 \geq l_2 \geq \cdots)
\\
& \iff
\begin{cases}
|k| > |l|,\quad \text{or} \\
|k| = |l| \text{ and } (k_1, k_2, \dots) 
\text{ is lexicographically greater than } (l_1, l_2, \dots).
\end{cases}
\end{aligned}
\]

If the partition \( \lambda \) has repeated parts, then by substituting the rearranged recurrence
\begin{equation}\tag{\ref{eq:zenkasiki}'}
\begin{aligned}
(q_n^{(\beta)})^2 &= 
2 q_{n+1}^{(\beta)} q_{n-1}^{(\beta)} 
- 2 q_{n+2}^{(\beta)} q_{n-2}^{(\beta)} 
+ \cdots 
+ (-1)^n 2 q_{2n}^{(\beta)} \\
& \quad 
+ (-1)^n \sum_{|i| > 2n} \beta^{|i| - 2n} C_i \cdot 
q_{i_1}^{(\beta)} q_{i_2}^{(\beta)} \dots q_{i_m}^{(\beta)},
\end{aligned}
\end{equation}
one can express \( q_\lambda^{(\beta)} \) as a linear combination of monomials strictly greater than itself with respect to \( \succ' \).
Repeating this reduction leads to an expression for \( q_\lambda^{(\beta)} \) as an infinite \( \mathbb{Z}[\beta] \)-linear combination of \( q_\mu^{(\beta)} \) for strict partitions \( \mu \in \mathcal{SP} \).
\end{proof}

Using the results above, we obtain the following fundamental theorem regarding bases of \( G\Gamma_\sharp \):

\begin{thm}\label{prop:expansion_of_GGamma}
For each of the following five sets \( \mathrm{(i)} \)–\( \mathrm{(v)} \), any element of \( G\Gamma_\sharp \) can be uniquely expressed as an infinite \( \mathbb{Q}(\beta) \)-linear combination of elements from that set:
\[
\mathrm{(i)} \{p_\lambda^{(\beta)}\}_{\lambda \in \mathcal{OP}} ,\quad
\mathrm{(ii)} \{q_\lambda^{(\beta)}\}_{\lambda \in \mathcal{OP}},\quad
\mathrm{(iii)} \{\overline{q}_\lambda^{(\beta)}\}_{\lambda \in \mathcal{OP}},\quad
\mathrm{(iv)} \{q_\lambda^{(\beta)}\}_{\lambda \in \mathcal{SP}},\quad
\mathrm{(v)} \{\overline{q}_\lambda^{(\beta)}\}_{\lambda \in \mathcal{SP}}.
\]
%
\end{thm}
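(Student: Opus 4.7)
The plan is to deduce (ii)--(v) from (i), which is Proposition~\ref{cor:p_isbasis_ofG}, by a leading-term argument with respect to the $p$-degree filtration
\[
F^n G\Gamma_\sharp := \bigl\{ f \in G\Gamma_\sharp \,\bigm|\, \text{every } p\text{-degree}<n \text{ component of } f \text{ vanishes} \bigr\}.
\]
By (i), each $p_\lambda^{(\beta)}$ ($\lambda \in \mathcal{OP}$) lies in $F^{|\lambda|}$ with ``leading term'' $p_\lambda$, and $G\Gamma_\sharp$ is the completion $\prod_{\lambda \in \mathcal{OP}}\mathbb{Q}(\beta)\cdot p_\lambda^{(\beta)}$; hence the associated graded $F^n/F^{n+1}$ is canonically the $\mathbb{Q}(\beta)$-span of $\{p_\lambda\}_{\lambda \in \mathcal{OP}_n}$, identified with the classical degree-$n$ piece of $\Gamma|_{\beta=0}\otimes \mathbb{Q}(\beta)$.

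I next collect the required leading-term data. Inspection of $Q^{(\beta)}(z)=\prod_i (1-\overline{x_i}z)/(1-x_iz)$ shows that $q_n^{(\beta)}$ has $p$-degree at least $n$, and setting $\beta=0$ (where $\overline{x_i}=-x_i$) identifies its leading term in $F^n/F^{n+1}$ with the classical Schur $q$-function $q_n$; hence $q_\lambda^{(\beta)} \in F^{|\lambda|}$ has leading term $q_\lambda$, and by Lemma~\ref{lemma:involution}, $\overline{q}_\lambda^{(\beta)}\in F^{|\lambda|}$ has leading term $(-1)^{|\lambda|}q_\lambda$. On the classical side, both $\{q_\lambda\}_{\lambda \in \mathcal{OP}_n}$ and $\{q_\lambda\}_{\lambda \in \mathcal{SP}_n}$ are $\mathbb{Q}$-bases of $\Gamma_n|_{\beta=0}$: the $\mathcal{SP}$-case is Macdonald~III.8, while the $\mathcal{OP}$-case follows by specializing Lemma~\ref{eq:expansion_by_odd} at $\beta=0$ (so the infinite tail in \eqref{eq:zenkasiki} vanishes and one gets a finite polynomial expression for $q_{2n}$ in $q_1,q_3,\dots,q_{2n-1}$), combined with the Euler identity $|\mathcal{OP}_n|=|\mathcal{SP}_n|$.

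For (ii), the continuous $\mathbb{Q}(\beta)$-linear map $\prod_{\lambda\in\mathcal{OP}}\mathbb{Q}(\beta)\to G\Gamma_\sharp$ defined by $(a_\lambda)_\lambda\mapsto \sum_\lambda a_\lambda q_\lambda^{(\beta)}$ induces on each $F^n/F^{n+1}$ the classical change-of-basis from $\{p_\lambda\}_{\lambda\in\mathcal{OP}_n}$ to $\{q_\lambda\}_{\lambda\in\mathcal{OP}_n}$, which is invertible by the preceding paragraph; a standard successive-approximation argument in a complete filtered space then upgrades this to a $\mathbb{Q}(\beta)$-isomorphism of $G\Gamma_\sharp$. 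Claim (iv) is proved identically with $\mathcal{SP}$ in place of $\mathcal{OP}$ in the parametrizing set (the source and target of the graded map have the same dimension $|\mathcal{SP}_n|=|\mathcal{OP}_n|$, and the matrix is invertible because $\{q_\lambda\}_{\lambda\in\mathcal{SP}_n}$ is a classical basis). Claims (iii) and (v) follow from (ii) and (iv) by applying the $\mathbb{Q}(\beta)$-algebra involution of Lemma~\ref{lemma:involution}, which swaps $q_\lambda^{(\beta)}\leftrightarrow \overline{q}_\lambda^{(\beta)}$ and thus sends each topological basis to a topological basis.

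The main subtle step is verifying the identification of $F^n/F^{n+1}$ with $\mathbb{Q}(\beta)\text{-span}\{p_\lambda:\lambda\in\mathcal{OP}_n\}$ rigorously from (i); concretely, one must check that the K-Q-cancellation property together with the infinite-sum structure genuinely prevents $p_\mu$ with $\mu\notin\mathcal{OP}_n$ from appearing as a leading term of any element of $F^n$. Once the associated graded is pinned down and the leading-term computations above are in place, the rest is a routine triangularity argument.
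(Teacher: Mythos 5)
Your argument is correct, but it reaches (ii) and (iv) by a genuinely different route than the paper. The paper keeps everything at the $\beta$-deformed level: (ii) is obtained from (i) via the generating function \eqref{eq:pbeta_to_qbeta} relating $p^{(\beta)}_m$ and $q^{(\beta)}_n$, and (iv) is obtained from Lemma~\ref{eq:expansion_by_strict}, an explicit rewriting algorithm that repeatedly substitutes the recurrence (\ref{eq:zenkasiki}') and terminates because the monomial order $\succ'$ is a well-order. You instead filter $G\Gamma_\sharp$ by $p$-degree, identify the associated graded with the classical ring $\Gamma|_{\beta=0}\otimes\mathbb{Q}(\beta)$ using Proposition~\ref{cor:p_isbasis_ofG}, compute leading terms (your $\overline{q}_n^{(0)}=(-1)^n q_n$ is the correct sign, consistent with \eqref{eq:overlineq_to_q}), quote the classical basis facts for $\{q_\lambda\}_{\mathcal{OP}}$ and $\{q_\lambda\}_{\mathcal{SP}}$, and conclude by successive approximation in the complete filtered space; the ``subtle step'' you flag does follow immediately from (i), since every element of $G\Gamma_\sharp$ is $\sum_{\lambda\in\mathcal{OP}}a_\lambda p_\lambda^{(\beta)}$ and the lowest-degree component of such a sum is $\sum_{|\lambda|=\min}a_\lambda p_\lambda$. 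The trade-off: your reduction to $\beta=0$ handles existence and uniqueness (linear independence) in one stroke, whereas the paper's citation of Lemma~\ref{eq:expansion_by_strict} for (iv) only explicitly addresses spanning; on the other hand, the paper's direct rewriting yields $\mathbb{Z}[\beta]$-integrality of the coefficients in case (iv), a refinement that matters for the integral structure studied in Section~\ref{eq:z-str_of_gGamma} and that your associated-graded argument does not recover.
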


\begin{proof}
The claim for (i) is exactly Proposition~\ref{cor:p_isbasis_ofG}.  
The basis transformation between (i) and (ii) is given by the generating function~\eqref{eq:pbeta_to_qbeta}.  
The result for (iii) follows from (ii) and the involution in Lemma~\ref{lemma:involution}.  
The statement for (iv) is a direct consequence of Lemma~\ref{eq:expansion_by_strict},  
and that for (v) again follows from (iv) and the involution in Lemma~\ref{lemma:involution}.
\end{proof}

\subsection{\( \mathbb{Q}(\beta) \)-basis of \( G\Gamma \)}

In the previous subsection, we constructed bases for \( G\Gamma_\sharp \) in the sense of infinite sums, but our primary interest lies in \( G\Gamma \). 
The \( K \)-\( Q \) functions play an essential role as building blocks for \( G\Gamma \). These functions, denoted \( GQ_\lambda \), form a family of symmetric functions indexed by strict partitions \( \lambda \) (see~\cite{IKEDA201322}). In this section, we focus only on the case where \( \lambda = (n) \) consists of a single row. 
Let \( GQ_n := GQ_{(n)}(x) \).

The \( K \)-\( Q \) function \( GQ_n \) is defined via the generating function~\cite{HUDSON2017115}
\begin{equation}\label{eq:gen_of_GQ}
\sum_{n \in \mathbb{Z}} GQ_n z^n = 
\frac{1}{1 + \beta z^{-1}} \cdot \frac{Q^{(\beta)}(z)}{Q^{(\beta)}(-\beta)}, 
\qquad 
\text{where } \frac{1}{1 + \beta z^{-1}} = 1 - \beta z^{-1} + \beta^2 z^{-2} - \cdots.
\end{equation}  
In particular, for \( n \geq 0 \), we have \( GQ_{-n} = (-\beta)^n \).

From \eqref{eq:gen_of_GQ}, we obtain the expansion
\begin{equation}\label{eq:exp_of_GQ}
\begin{aligned}
GQ_n =
\frac{q_n^{(\beta)} - \beta q_{n+1}^{(\beta)} + \beta^2 q_{n+2}^{(\beta)} - \cdots}
{1 - \beta q_1^{(\beta)} + \beta^2 q_2^{(\beta)} - \cdots}
=
q_n^{(\beta)} + \sum_{\mu \succ (n)} c_\mu q_\mu^{(\beta)}
\qquad (c_\mu \in \mathbb{Z}[\beta]),
\end{aligned}
\end{equation}
which allows us to regard \( GQ_n \) as an element of \( G\Gamma_\sharp \).

\begin{lemma}\label{lemma:finiteness_of_GQ}
Let \( N \) be a positive integer, and define the specialization
\[
GQ_n(x_1, \dots, x_N) := GQ_n(x_1, \dots, x_N, 0, 0, \dots).
\]
Then we have
\[
GQ_n(x_1, \dots, x_N) \in \mathbb{Z}[\beta][x_1, \dots, x_N].
\]
\end{lemma}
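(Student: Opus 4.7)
My plan is to exploit the generating function \eqref{eq:gen_of_GQ} by writing the $N$-variable specialization as an explicit rational function in $z$, separating off the polar part at $z=-\beta$ (whose Laurent coefficients are the known values $GQ_{-k}$), and then expanding the remaining piece using complete homogeneous symmetric functions $h_m(x_1,\dots,x_N)$.

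The first step is to evaluate the specialization concretely. A direct computation gives $1+\beta\overline{x_i}=1/(1+\beta x_i)$, so $Q^{(\beta)}(-\beta)(x_1,\dots,x_N,0,0,\dots)=\prod_{i=1}^N(1+\beta x_i)^{-2}$, and likewise $(1+\beta x_i)(1-\overline{x_i}z)=1+\beta x_i + x_iz$. Substituting these into \eqref{eq:gen_of_GQ} yields
\[
\sum_{n\in\mathbb{Z}} GQ_n(x_1,\dots,x_N)\,z^n = \frac{zP(z)}{(z+\beta)\prod_{i=1}^N(1-x_iz)},
\]
where $P(z):=\prod_{i=1}^N(1+\beta x_i)(1+\beta x_i + x_iz)\in\mathbb{Z}[\beta][x_1,\dots,x_N][z]$. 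Using $GQ_{-k}=(-\beta)^k$ for $k\geq 0$, the Laurent tail $\sum_{k\geq 1} GQ_{-k}z^{-k}$ sums to the geometric series $-\beta/(z+\beta)$, so subtracting it from both sides gives
\[
\sum_{n\geq 0} GQ_n(x_1,\dots,x_N)\,z^n = \frac{zP(z)+\beta\prod_{i=1}^N(1-x_iz)}{(z+\beta)\prod_{i=1}^N(1-x_iz)}.
\]

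The key observation is that the numerator on the right vanishes at $z=-\beta$: it equals $-\beta P(-\beta) + \beta\prod_i(1+\beta x_i)$, and the factored form of $P$ gives $P(-\beta)=\prod_i(1+\beta x_i)(1+\beta x_i-\beta x_i)=\prod_i(1+\beta x_i)$, so the two terms cancel. Because $z+\beta$ is monic in $z$, polynomial division in $\mathbb{Z}[\beta][x_1,\dots,x_N][z]$ produces a polynomial $\tilde{Q}(z)\in\mathbb{Z}[\beta][x_1,\dots,x_N][z]$ with $zP(z)+\beta\prod_i(1-x_iz)=(z+\beta)\tilde{Q}(z)$, whence
\[
\sum_{n\geq 0} GQ_n(x_1,\dots,x_N)\,z^n = \frac{\tilde{Q}(z)}{\prod_{i=1}^N(1-x_iz)}.
\]

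Finally, expanding $1/\prod_{i=1}^N(1-x_iz)=\sum_{m\geq 0} h_m(x_1,\dots,x_N)z^m$ with $h_m\in\mathbb{Z}[x_1,\dots,x_N]$, each $GQ_n(x_1,\dots,x_N)$ becomes a finite $\mathbb{Z}[\beta][x_1,\dots,x_N]$-linear combination of the $h_m$, completing the proof. The only nontrivial step is verifying the divisibility by $z+\beta$, and this collapses to the pleasant identity $P(-\beta)=\prod_i(1+\beta x_i)$, which reads off directly from the factored form of $P$; every other manipulation is a formal rewriting of a single rational identity.
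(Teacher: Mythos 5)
Your proof is correct and follows essentially the same route as the paper's: both reduce \eqref{eq:gen_of_GQ} to an explicit rational function, strip off the Laurent tail coming from $GQ_{-k}=(-\beta)^k$, observe that the resulting numerator vanishes at $z=-\beta$ so that the factor $z+\beta$ divides out over $\mathbb{Z}[\beta][x_1,\dots,x_N]$, and expand the remaining $1/\prod_i(1-x_iz)$. The only cosmetic difference is that you retain the $n=0$ term and verify the vanishing via the tidy identity $P(-\beta)=\prod_i(1+\beta x_i)$, which the paper leaves implicit.
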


\begin{proof}
Subtracting \( (1 + \beta z^{-1})^{-1} \) from both sides of~\eqref{eq:gen_of_GQ}, we obtain
\begin{equation}\label{eq:gen_of_GQ_2}
\sum_{n = 1}^\infty GQ_n z^n = 
\frac{z}{z + \beta} \cdot \frac{Q^{(\beta)}(z) - Q^{(\beta)}(-\beta)}{Q^{(\beta)}(-\beta)}.
\end{equation}
Substituting~\eqref{eq:pbeta_to_qbeta} and rearranging, we find
\begin{equation}\label{eq:exansion_of_GQn1}
\sum_{n = 1}^\infty GQ_n z^n =
z \cdot 
\frac{
\prod_i (1 + x_i(z + \beta)) \prod_i (1 + \beta x_i) - \prod_i (1 - x_i z)
}{z + \beta}
\cdot 
\frac{1}{\prod_i (1 - x_i z)}.
\end{equation}
Note that the numerator vanishes at \( z = -\beta \):
\[
\left[
\prod_i (1 + x_i(z + \beta)) \prod_i (1 + \beta x_i) - \prod_i (1 - x_i z)
\right]_{z = -\beta} = 0.
\]
Therefore, by the factor theorem, the rational function
\[
\frac{
\prod_{i=1}^N (1 + x_i(z + \beta)) \prod_{i=1}^N (1 + \beta x_i) - \prod_{i=1}^N (1 - x_i z),
}{z + \beta}
\]
obtained after the substitution \( x_{N+1} = x_{N+2} = \cdots = 0 \), is actually a polynomial in \( z \) with coefficients in \( \mathbb{Z}[\beta][x_1, \dots, x_N] \) .  
Hence, the right-hand side of~\eqref{eq:exansion_of_GQn1} is a series in \( z \) with coefficients in \( \mathbb{Z}[\beta][x_1, \dots, x_N] \).
\end{proof}

\begin{exa}
When \( x_1 = x \) and \( x_2 = x_3 = \cdots = 0 \), the generating function~\eqref{eq:exansion_of_GQn1} becomes
\[
\sum_{n = 1}^\infty GQ_n(x) z^n = 
(2x + \beta x^2) \cdot z(1 + xz + x^2 z^2 + \cdots).
\]
In particular, \( GQ_n(x) = x^n (2 + \beta x) \in \mathbb{Z}[\beta][x] \).
\end{exa}

From \eqref{eq:car_of_GGamma} and Lemma \ref{lemma:finiteness_of_GQ}, we find $GQ_n\in G\Gamma$.
Moreover, the structure of $G\Gamma$ is expressed as follows:

\begin{prop}\label{prop:exp_of_GGamma_in_GQ}
Any element of \( G\Gamma \) can be uniquely expressed as an infinite sum of elements in \( \mathbb{Q}(\beta)[GQ_1, GQ_3, GQ_5, \dots] \).
\end{prop}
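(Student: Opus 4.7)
My plan is to start from the expansion in Theorem~\ref{prop:expansion_of_GGamma}(ii) and transport it to the family \(\{GQ_\mu\}_{\mu \in \mathcal{OP}}\) via a strictly degree-triangular transition inside \(G\Gamma_\sharp\).  Given \(f \in G\Gamma \subset G\Gamma_\sharp\), Theorem~\ref{prop:expansion_of_GGamma}(ii) yields a unique expansion \(f = \sum_{\lambda \in \mathcal{OP}} a_\lambda q_\lambda^{(\beta)}\).  For \(\mu \in \mathcal{OP}\), set \(GQ_\mu := GQ_{\mu_1} GQ_{\mu_2} \cdots\); the goal is to find unique \(b_\mu \in \mathbb{Q}(\beta)\) such that the formal series \(\sum_\mu b_\mu GQ_\mu\) converges in \(\widehat{\Lambda}_\sharp\) to \(f\).

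The main step is the degree-triangular expansion
\[
GQ_\mu = q_\mu^{(\beta)} + \sum_{\substack{\nu \in \mathcal{OP} \\ |\nu| > |\mu|}} M_{\mu\nu}\, q_\nu^{(\beta)},
\qquad M_{\mu\nu} \in \mathbb{Q}(\beta).
\]
To obtain this, first observe from \eqref{eq:exp_of_GQ} that each single factor satisfies \(GQ_{\mu_i} = q_{\mu_i}^{(\beta)} + (\text{terms of total degree} > \mu_i)\); multiplying over \(i\) gives \(GQ_\mu = q_\mu^{(\beta)} + R_\mu\), where \(R_\mu\) is a combination of monomials in \(\{q_n^{(\beta)}\}_{n \geq 1}\) of total degree strictly greater than \(|\mu|\).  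Next, I would apply Lemma~\ref{eq:expansion_by_odd} to rewrite \(R_\mu\) as an infinite \(\mathbb{Q}[\beta]\)-linear combination of \(q_\nu^{(\beta)}\) with \(\nu \in \mathcal{OP}\); because each application of the recurrence \eqref{eq:zenkasiki} replaces its left-hand side by terms of equal or strictly greater total degree, every \(\nu\) that arises still satisfies \(|\nu| > |\mu|\), confirming the strict triangularity.

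Given this triangularity, I would define the coefficients \(b_\nu\) by induction on \(|\nu|\) via
\[
b_\nu := a_\nu - \sum_{\substack{\mu \in \mathcal{OP} \\ |\mu| < |\nu|}} b_\mu M_{\mu\nu},
\]
which is a finite computation at each step and yields a unique solution.  The formal series \(\sum_\mu b_\mu GQ_\mu\) converges in \(\widehat{\Lambda}_\sharp\) since, at any fixed total degree \(d\), only the \(\mu\) with \(|\mu| \le d\) contribute; by construction its expansion in the basis \(\{q_\nu^{(\beta)}\}_{\nu \in \mathcal{OP}}\) matches that of \(f\), so Theorem~\ref{prop:expansion_of_GGamma}(ii) forces the sum to equal \(f\).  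The main obstacle is the clean verification of strict triangularity — specifically, ensuring that the Lemma~\ref{eq:expansion_by_odd} reduction, which is itself an infinite process, never introduces a term \(q_\nu^{(\beta)}\) with \(|\nu| \le |\mu|\) beyond the diagonal contribution.  Once this degree-monotonicity is pinned down, the existence, uniqueness, and convergence of the \(GQ\)-expansion of \(f\) all follow as formal bookkeeping in the degree-completed algebra \(\widehat{\Lambda}_\sharp\).
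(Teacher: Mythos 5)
Your proposal is correct and takes essentially the same route as the paper: both rest on the unitriangularity of the transition from $\{GQ_\mu\}_{\mu\in\mathcal{OP}}$ to $\{q_\nu^{(\beta)}\}_{\nu\in\mathcal{OP}}$ coming from \eqref{eq:exp_of_GQ}, inverted degree by degree. The ``main obstacle'' you flag is already settled by the form of \eqref{eq:zenkasiki}, whose right-hand side consists only of monomials of total degree $\geq 2n$, so the reduction of Lemma~\ref{eq:expansion_by_odd} never lowers degree and your strict triangularity holds.
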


\begin{proof}
Fix an odd integer \( m \). From Theorem~\ref{prop:expansion_of_GGamma} and~\eqref{eq:exp_of_GQ}, we know that
\begin{equation}\label{eq:GQ_to_q}
GQ_m = q_m^{(\beta)} + \sum_{\substack{\mu \succ (m) \\ \mu \in \mathcal{OP}}} c_\mu q_\mu^{(\beta)}, \qquad (c_\mu \in \mathbb{Q}(\beta)),
\end{equation}
which gives a unique infinite expansion. Using standard arguments based on upper triangularity with respect to the order \( \succ \), one can solve~\eqref{eq:GQ_to_q} recursively to express \( q_m^{(\beta)} \) as an infinite linear combination of \( GQ_1, GQ_3, \dots \)
\end{proof}

\begin{remark}
It is not true that \( GQ_{2n} \in \mathbb{Q}(\beta)[GQ_1, GQ_3, GQ_5, \dots] \).  
In particular, the infinite sum in Proposition~\ref{prop:exp_of_GGamma_in_GQ} cannot be replaced by a finite one.
For example, suppose \( p_1^{(\beta)} = t/2 \) and \( p_3^{(\beta)} = p_5^{(\beta)} = \cdots = 0 \).  
Then we obtain the expression
\[
GQ_n = 1 -
\left(1 - \beta t + \frac{\beta^2 t^2}{2!} - \cdots + (-1)^{n-1} \frac{\beta^{n-1} t^{n-1}}{(n-1)!} \right)
\cdot e^{\beta t} 
\quad \in \mathbb{Q}(\beta)[t, e^{\beta t}].
\]
From this expression, we show that \( GQ_{2n} \) cannot be written as a polynomial in $ GQ_1$, $GQ_3, \dots $ with coefficients in \( \mathbb{Q}(\beta) \).\footnote{
(Proof.) Let \( f_n := 1 - \beta t + \cdots + (-1)^{n-1} \frac{\beta^{n-1} t^{n-1}}{(n-1)!} \).  
Then, we have $GQ_n=1-f_ne^{t\beta}$.
Since $e^{t\beta}$ is transcendental over $\QQ(\beta)[f_1,f_3,f_5,\dots]$, any element in \( \mathbb{Q}(\beta)[GQ_1, GQ_3, GQ_5, \dots] \) must be uniquely written in the form
\[
c_0 + c_1 e^{\beta t} + c_2 e^{2\beta t} + \cdots + c_N e^{N \beta t}
\qquad
(c_i \text{ are homogeneous degree } i \text{ polynomials in } f_1, f_3, f_5, \dots).
\]
If we wish to express \( GQ_{2n}=1-f_{2n}e^{t\beta} \) as such a polynomial, we need to express \( f_{2n} \) as a \( \mathbb{Q}(\beta) \)-linear combination of \( f_1, f_3, f_5, \dots \); however, it is impossible.
}
\end{remark}

\section{Algebra of the dual \( K \)-\( Q \)-cancellation property}\label{sec:dualKQ}

Just like \( G\Gamma \), the algebra \( g\Gamma \) is a \( K \)-theoretic analogue of \( \Gamma \).  
However, unlike \( G\Gamma \), the elements of \( g\Gamma \) are ordinary symmetric functions (i.e., not infinite sums).  
The two algebras \( G\Gamma \) and \( g\Gamma \) are dual to each other with respect to the bilinear pairing introduced by Nakagawa–Naruse (see Equation~\eqref{eq:def_of_bilin}).
This duality can be described in a clean and unified manner by introducing an alternative set of \( \beta \)-deformed power sums \( p_n^{[\beta]} \), as well as the corresponding \( q \)-functions \( q_n^{[\beta]} \).

\subsection{\( \mathbb{Q}(\beta) \)-basis of \( g\Gamma \)}\label{sec:dualgGamma}

\subsubsection{An alternative \( \beta \)-deformed power sum \( p_n^{[\beta]} \)}

Here we introduce a new family of \textit{\( \beta \)-deformed power sums} \( p_n^{[\beta]} = p_n^{[\beta]}(x) \), distinct from the previously defined ones. They are defined by
\begin{equation}\label{eq:def_of_an_p}
p_n^{[\beta]}(x) = \sum_{k=0}^{n-1} \binom{n}{k} \left( \frac{\beta}{2} \right)^k p_{n-k}(x).
\end{equation}
Each \( p_n^{[\beta]} \) is a symmetric function, and hence belongs to \( \Lambda \) (compare with~\eqref{eq:p^beta_vs_p}).

Equation \eqref{eq:def_of_an_p} can be formally interpreted as
\begin{equation}\label{eq:def_of_an_p_formal}
\text{``\( p_n^{[\beta]}(x) = p_n(x + \tfrac{\beta}{2}) - p_n(\tfrac{\beta}{2}) \)"}
\end{equation}
where the expression is only formal, since \( p_n(\tfrac{\beta}{2}) = (\tfrac{\beta}{2})^n + (\tfrac{\beta}{2})^n + \cdots \) is not an element of \( \Lambda \).
Precisely,~\eqref{eq:def_of_an_p_formal} states that for any \( N \), we have
\begin{equation}\label{eq:betap_condition}
p_n^{[\beta]}(x_1, \dots, x_N) = 
p_n(x_1 + \tfrac{\beta}{2}, \dots, x_N + \tfrac{\beta}{2}) - N\left( \tfrac{\beta}{2} \right)^n.
\end{equation}
When \( \beta = 0 \), the function \( p_n^{[\beta]} \) reduces to the ordinary power sum \( p_n \).

\begin{lemma}
If \( m \) is odd, then \( p_m^{[\beta]} \in g\Gamma \).
\end{lemma}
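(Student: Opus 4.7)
The plan is to reduce the computation to the classical fact that $p_m(T,-T,X_3,X_4,\dots)=p_m(X_3,X_4,\dots)$ for odd $m$, via the ``shift by $\beta/2$'' interpretation of $p_m^{[\beta]}$ given in \eqref{eq:betap_condition}. The key observation is that under the substitution $x_1=t$, $x_2=-t-\beta$, the shifted variables $x_1+\tfrac{\beta}{2}$ and $x_2+\tfrac{\beta}{2}$ become $T:=t+\tfrac{\beta}{2}$ and $-T$, respectively. This turns the first two slots of the shifted power sum into a pair of the form $(T,-T)$, which for odd $m$ contributes nothing.

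Concretely, I would fix $N\geq 2$ and work with the specialization to $N$ variables. Applying \eqref{eq:betap_condition} to $(t,\,-t-\beta,\,x_3,\dots,x_N)$ yields
\[
p_m^{[\beta]}(t,-t-\beta,x_3,\dots,x_N)
= p_m\bigl(t+\tfrac{\beta}{2},\,-(t+\tfrac{\beta}{2}),\,x_3+\tfrac{\beta}{2},\dots,x_N+\tfrac{\beta}{2}\bigr)-N\bigl(\tfrac{\beta}{2}\bigr)^m.
\]
Setting $T=t+\tfrac{\beta}{2}$ and using that $p_m$ satisfies the ordinary $Q$-cancellation property for odd $m$, the first two arguments cancel, giving
\[
p_m^{[\beta]}(t,-t-\beta,x_3,\dots,x_N)
= \sum_{i=3}^{N}\bigl(x_i+\tfrac{\beta}{2}\bigr)^{m}-N\bigl(\tfrac{\beta}{2}\bigr)^m,
\]
which manifestly does not depend on $t$.

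Finally, since this holds after every finite specialization and the family of such specializations determines an element of $\Lambda$, we conclude that $p_m^{[\beta]}(t,-t-\beta,x_3,x_4,\dots)$ is independent of $t$ as an element of $\Lambda$, i.e.\ $p_m^{[\beta]}\in g\Gamma$. No step here should be a serious obstacle; the only mild subtlety is making sure to use \eqref{eq:betap_condition} rather than the formal identity \eqref{eq:def_of_an_p_formal} directly, since $p_m(\tfrac{\beta}{2},\tfrac{\beta}{2},\dots)$ does not lie in $\Lambda$, and in particular to track that the $-N(\tfrac{\beta}{2})^m$ correction term is indeed $t$-free.
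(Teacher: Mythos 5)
Your proof is correct and follows essentially the same route as the paper: specialize to $N$ variables, use \eqref{eq:betap_condition} to rewrite $p_m^{[\beta]}$ as a shifted ordinary power sum, observe that $x_1=t$, $x_2=-t-\beta$ becomes the pair $(T,-T)$ with $T=t+\tfrac{\beta}{2}$, and invoke the classical odd-$m$ cancellation. If anything, your version is slightly more careful than the paper's in retaining the $+\tfrac{\beta}{2}$ shifts on $x_3,\dots,x_N$ and in flagging why \eqref{eq:betap_condition} must be used instead of the formal identity \eqref{eq:def_of_an_p_formal}.
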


\begin{proof}
It suffices to show that \( p_m^{[\beta]}(x_1, \dots, x_N) \) satisfies the dual \( K \)-\( Q \)-cancellation property for all \( N \).  
Substitute \( x_1 = t \), \( x_2 = -t - \beta \) into~\eqref{eq:betap_condition}. Then, we have
\[
\begin{aligned}
p_m^{[\beta]}(t, -t - \beta, x_3, \dots, x_N) 
&= p_m(t + \tfrac{\beta}{2}, -t - \tfrac{\beta}{2}, x_3, \dots, x_N) - N\left( \tfrac{\beta}{2} \right)^m \\
&= p_m(x_3, \dots, x_N) - N\left( \tfrac{\beta}{2} \right)^m.
\end{aligned}
\]
Since this expression is independent of \( t \), the claim follows.
\end{proof}

\begin{prop}\label{prop:p_isbasis_ofg}
We have
\[
g\Gamma = \mathbb{Q}(\beta)[p_1^{[\beta]}, p_3^{[\beta]}, p_5^{[\beta]}, \dots].
\]
\end{prop}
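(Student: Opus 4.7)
The plan is to establish the two inclusions of Proposition~\ref{prop:p_isbasis_ofg} separately. The inclusion $\mathbb{Q}(\beta)[p_1^{[\beta]}, p_3^{[\beta]}, \dots] \subseteq g\Gamma$ is immediate from the preceding lemma together with the fact that $g\Gamma$ is a subalgebra of $\Lambda$.

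For the reverse inclusion, I would first note that the defining expansion~\eqref{eq:def_of_an_p} gives $p_n^{[\beta]} = p_n + (\text{linear combination of } p_1, \dots, p_{n-1})$, so by the standard triangular-change-of-basis argument $\{p_n^{[\beta]}\}_{n \geq 1}$ is another set of algebraically free generators of $\Lambda$ over $\mathbb{Q}(\beta)$. Hence every $f \in g\Gamma$ admits a unique polynomial expression $f = P(p_1^{[\beta]}, p_2^{[\beta]}, p_3^{[\beta]}, \dots)$, and the task reduces to showing that the dual $K$-$Q$-cancellation condition forces $P$ to be independent of each even-indexed variable $P_{2k}$.

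The key computation is the evaluation of the generators at $(x_1, x_2) = (t, -t-\beta)$. Using \eqref{eq:def_of_an_p} termwise (cf.\ the formal identity~\eqref{eq:def_of_an_p_formal}) and writing $T := t + \beta/2$, one obtains
\[
p_n^{[\beta]}(t, -t-\beta, x_3, \dots) \;=\; p_n^{[\beta]}(x_3, x_4, \dots) + \varepsilon_n(T),
\]
where $\varepsilon_n(T) = -2(\beta/2)^n$ is $T$-free for odd $n$ and $\varepsilon_{2k}(T) = 2T^{2k} - 2(\beta/2)^{2k}$ for even $n = 2k$. Setting $U_i := p_i^{[\beta]}(x_3, x_4, \dots) - 2(\beta/2)^i$, the elements $T, U_1, U_2, \dots$ are algebraically independent over $\mathbb{Q}(\beta)$ (since the $p_i^{[\beta]}(x_3, x_4, \dots)$ are algebraically independent generators of $\Lambda_{x_3, x_4, \dots}$), and the dual cancellation condition on $f$ translates into the polynomial identity
\[
P(U_1, U_2 + 2T^2, U_3, U_4 + 2T^4, \dots) \;=\; P(U_1, U_2, U_3, U_4, \dots)
\]
inside $\mathbb{Q}(\beta)[T, U_1, U_2, \dots]$.

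The final step is an induction on $m \geq 1$ showing that $\partial P / \partial P_{2m} = 0$. Assuming $P$ does not involve $P_2, P_4, \dots, P_{2m-2}$, I would expand the left-hand side as a multi-index Taylor series about $(U_1, U_2, \dots)$ and equate coefficients of $T^{2m}$ on both sides of the identity. The inductive hypothesis restricts any contributing multi-index $\alpha$ to be supported on $\{k \geq m\}$, and then the weight constraint $\sum_k k \alpha_{2k} = m$ forces $\alpha$ to be the unit vector at $k = m$; in particular, no quadratic or higher cross-derivative term can contribute, since $\sum_i k_i \geq 2m > m$ in that case. Thus the coefficient of $T^{2m}$ reduces to exactly $2 \cdot (\partial P / \partial P_{2m})(U)$, which must vanish, completing the induction. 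I expect the main obstacle to be this combinatorial verification: one has to check carefully that the inductive hypothesis really does eliminate every higher-order Taylor cross-term, so that only the intended single partial derivative survives at each stage.
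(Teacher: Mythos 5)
Your proof is correct, but it follows a genuinely different route from the paper's. The paper's argument is a two-step reduction: writing $f=\sum_\lambda c_\lambda p^{[\beta]}_{\lambda_1}\cdots p^{[\beta]}_{\lambda_\ell}$ and using \eqref{eq:betap_condition}, it observes that the dual $K$-$Q$-cancellation of $f$ is equivalent (after the shift $x_i\mapsto x_i+\tfrac{\beta}{2}$) to the \emph{ordinary} $Q$-cancellation of the auxiliary polynomial $g$ in \eqref{eq:poly_g}; it then invokes the classical fact \eqref{eq:Gamma_0} from Macdonald that the $Q$-cancellation algebra is $\mathbb{Q}[p_1,p_3,\dots]$, and concludes $c_\lambda=0$ unless $\lambda\in\mathcal{OP}$ by choosing $N$ large enough that the relevant $p_m(x_1,\dots,x_N)$ are algebraically independent. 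You instead keep infinitely many variables, compute directly that $p^{[\beta]}_n(t,-t-\beta,x_3,\dots)=U_n$ for odd $n$ and $U_{2k}+2T^{2k}$ for even $n=2k$ (which checks out: $T^n+(-T)^n-2(\beta/2)^n$ added to $p_n^{[\beta]}(x_3,\dots)$), and then eliminate the even-indexed variables by an inductive Taylor-coefficient argument; the combinatorial point you flag as the main obstacle does work, since under the inductive hypothesis the constraint $\sum_k k\alpha_{2k}=m$ with support in $\{k\ge m\}$ forces $\alpha$ to be the unit vector at $2m$, leaving exactly $2\,\partial P/\partial P_{2m}=0$. In effect you re-prove the Macdonald-type characterization in the deformed setting rather than transporting the problem to $\beta=0$ and citing it: your version is self-contained and makes the mechanism of cancellation explicit, while the paper's is shorter and makes the structural point that the dual $K$-$Q$-cancellation is just the classical $Q$-cancellation in the shifted variables $x_i+\tfrac{\beta}{2}$.
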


\begin{proof}
The inclusion 
\[
g\Gamma \supset \mathbb{Q}(\beta)[p_1^{[\beta]}, p_3^{[\beta]}, p_5^{[\beta]}, \dots]
\]
is obvious. We now prove the opposite inclusion.
Any symmetric function with coefficients in \( \mathbb{Q}(\beta) \) can be expressed as a polynomial in the ordinary power sums \( p_1, p_2, \dots \).  
Using~\eqref{eq:def_of_an_p}, we can express them as polynomials in \( p_1^{[\beta]}, p_2^{[\beta]}, \dots \) as well.

Let \( f(x) \in g\Gamma \) be an arbitrary element. Then it can be expressed as a finite sum
\begin{equation}\label{eq:exp_of_f}
f(x) = \sum_\lambda c_\lambda \cdot 
p_{\lambda_1}^{[\beta]} \cdots p_{\lambda_{\ell(\lambda)}}^{[\beta]},
\qquad (c_\lambda \in \mathbb{Q}(\beta)),
\end{equation}
where \( c_\lambda = 0 \) for all but finitely many \( \lambda \).
Since \( f(x) \) satisfies the dual \( K \)-\( Q \)-cancellation property, \eqref{eq:betap_condition} implies that the polynomial
\begin{equation}\label{eq:poly_g}
g(x_1, \dots, x_N) := 
\sum_\lambda c_\lambda \prod_{i=1}^{\ell(\lambda)} 
\left\{ p_{\lambda_i}(x_1, \dots, x_N) - N\left( \tfrac{\beta}{2} \right)^{\lambda_i} \right\}
\end{equation}
must satisfy the ordinary \( Q \)-cancellation property.  
Therefore, we have
\begin{equation}\label{eq:cond_Q_prop}
g(x_1, \dots, x_N) \in \mathbb{Q}(\beta)[p_m(x_1, \dots, x_N)\,;\, m \text{ odd}].
\end{equation}

Let $N$ be a sufficiently large integer so that all $p_{m}(x_1,\dots,x_N)$ in the expression \eqref{eq:poly_g} are  algebraically independent over \( \mathbb{Q}(\beta) \).  
Then \eqref{eq:cond_Q_prop} implies that \( c_\lambda \neq 0 \Rightarrow \lambda \in \mathcal{OP} \).
Hence, we have \( f(x) \in \mathbb{Q}(\beta)[p_1^{[\beta]}, p_3^{[\beta]}, \dots] \), which completes the proof.
\end{proof}

\subsubsection{An alternative \( \beta \)-deformed \( q \)-function \( q_n^{[\beta]} \)}

We define a new family of symmetric functions \( q_n^{[\beta]} \) by the generating function
\begin{equation}\label{eq:Q}
Q^{[\beta]}(z) := \sum_{n=0}^\infty q_n^{[\beta]} z^n
= \prod_i \frac{1 - x_i \overline{z}}{1 - x_i z}.
\end{equation}

\begin{lemma}
For any positive integer \( n \), we have \( q_n^{[\beta]} \in g\Gamma \).
\end{lemma}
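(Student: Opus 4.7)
The plan is to mirror the proof of the analogous statement for $q_n^{(\beta)}$, using the generating function definition~\eqref{eq:Q}. I would substitute $x_1 = t$ and $x_2 = -t-\beta$ into the product, obtaining
\[
\sum_{n} q_n^{[\beta]}(t, -t-\beta, x_3, \dots)\, z^n = \frac{(1 - t\overline{z})(1 + (t+\beta)\overline{z})}{(1 - tz)(1 + (t+\beta)z)} \cdot \prod_{i \geq 3} \frac{1 - x_i \overline{z}}{1 - x_i z},
\]
and it suffices to show that the leading fraction is independent of $t$.

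The core computation is to expand both the numerator and denominator of this fraction. Direct multiplication gives numerator $1 + \beta \overline{z} - t(t+\beta)\overline{z}^2$ and denominator $1 + \beta z - t(t+\beta) z^2$; note in particular that the cross-terms in $t$ cancel, leaving the same combination $t(t+\beta)$ on both sides. Using $\overline{z} = -z/(1 + \beta z)$ I then compute
\[
1 + \beta \overline{z} = \frac{1}{1 + \beta z}, \qquad \overline{z}^2 = \frac{z^2}{(1+\beta z)^2},
\]
so the numerator becomes $\bigl(1 + \beta z - t(t+\beta)z^2\bigr) / (1+\beta z)^2$, which is exactly the denominator divided by $(1+\beta z)^2$. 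Hence the fraction collapses to $(1+\beta z)^{-2}$, visibly independent of $t$.

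Comparing coefficients of $z^n$ then yields the dual $K$-$Q$-cancellation property for each $q_n^{[\beta]}$. The one point requiring care is that the lemma places $q_n^{[\beta]}$ in $g\Gamma \subset \Lambda$, not merely $\widehat{\Lambda}$; this is already guaranteed by the definition~\eqref{eq:Q}, since extracting the coefficient of $z^n$ from $\prod_i (1 - x_i \overline{z})/(1 - x_i z)$ produces a symmetric function of total degree $n$ in the $x_i$'s.

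The main obstacle — and the reason the specific substitution $x_2 = -t - \beta$ is the \emph{correct} dual to the original $x_2 = \overline{t}$ — is precisely this algebraic cancellation: it is the choice $-t-\beta$ (rather than, say, $\overline{t}$) that makes $(t) \cdot (t + \beta)$ appear symmetrically in both numerator and denominator and makes the identity $1 + \beta\overline{z} = (1+\beta z)^{-1}$ produce exactly the compensating factor $(1 + \beta z)^{-2}$. Once this algebraic miracle is verified, the rest of the argument is purely formal.
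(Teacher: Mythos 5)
Your proposal is correct and follows essentially the same route as the paper: substitute $x_1=t$, $x_2=-t-\beta$ into the generating function \eqref{eq:Q} and verify that the first two factors collapse to $(1+\beta z)^{-2}$, independent of $t$. The only cosmetic difference is that you expand the numerator and denominator to exhibit the common factor $1+\beta z - t(t+\beta)z^2$, whereas the paper manipulates the two fractions directly; the computation and conclusion are identical.
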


\begin{proof}
Substituting \( x_1 = t \), \( x_2 = -t - \beta \) into~\eqref{eq:Q}, we deduce
\[
\begin{aligned}
Q^{[\beta]}(z)\big|_{x_1 = t,\; x_2 = -t - \beta}
&=
\frac{1 - t \overline{z}}{1 - t z} \cdot
\frac{1 - (-t - \beta) \overline{z}}{1 - (-t - \beta) z} \cdot
\prod_{i \geq 3} \frac{1 - x_i \overline{z}}{1 - x_i z} \\
&=
\frac{1 + t \frac{z}{1 + \beta z}}{1 - t z} \cdot
\frac{1 + (-t - \beta) \frac{z}{1 + \beta z}}{1 + t z + \beta z} \cdot
\prod_{i \geq 3} \frac{1 - x_i \overline{z}}{1 - x_i z} \\
&=
\frac{1}{(1 + \beta z)^2} \cdot
\prod_{i \geq 3} \frac{1 - x_i \overline{z}}{1 - x_i z}.
\end{aligned}
\]
Since the result is independent of \( t \), we find that \( q_n^{[\beta]} \) satisfies the dual \( K \)-\( Q \)-cancellation property.
\end{proof}

We define another family of functions \( \overline{q}_n^{[\beta]} \) by
\[
\overline{Q}^{[\beta]}(z) := \sum_{n=0}^\infty \overline{q}_n^{[\beta]} z^n
= \sum_{n=0}^\infty q_n^{[\beta]} \overline{z}^n
= Q^{[\beta]}(\overline{z}).
\]
Using this, we have the expansion:
\begin{equation}\label{eq:qb_to_overqb}
\overline{q}_n^{[\beta]} = (-1)^n \sum_{l=0}^{n-1} \binom{n-1}{l} \beta^l q_{n - l}^{[\beta]}.
\end{equation}
In particular, \( \overline{q}_n^{[\beta]} \in g\Gamma \).

From~\eqref{eq:Q}, we obtain the identity
\begin{equation}\label{eq:QoverQ}
Q^{[\beta]}(z) \cdot \overline{Q}^{[\beta]}(z) =
\prod_i \frac{1 - x_i \overline{z}}{1 - x_i z} \cdot 
\prod_i \frac{1 - x_i z}{1 - x_i \overline{z}} = 1.
\end{equation}
Expanding both sides in \( z \) and comparing the coefficients of \( z^n \), we find the recurrence relation:
\begin{equation}\label{eq:base_relation}
q_n^{[\beta]} + q_{n-1}^{[\beta]} \overline{q}_1^{[\beta]} + 
q_{n-2}^{[\beta]} \overline{q}_2^{[\beta]} + \cdots + 
\overline{q}_n^{[\beta]} = 0.
\end{equation}

For even \( n \), substituting \( n \mapsto 2n \) into~\eqref{eq:base_relation} and inserting~\eqref{eq:qb_to_overqb}, we obtain the expansion:
\begin{equation}\label{eq:q2n_to_qodd}
\begin{aligned}
q_{2n}^{[\beta]} &=
q_{2n-1}^{[\beta]} q_1^{[\beta]} - q_{2n-2}^{[\beta]} q_2^{[\beta]} + \cdots
+ (-1)^n q_{n+1}^{[\beta]} q_{n-1}^{[\beta]} - (-1)^n \frac{(q_n^{[\beta]})^2}{2} \\
& \quad + \frac{1}{2} \sum_{|k| < 2n} \beta^{2n - |k|} C_k \cdot 
q_{k_1}^{[\beta]} q_{k_2}^{[\beta]} \cdots q_{k_p}^{[\beta]},
\end{aligned}
\end{equation}
where \( C_k \in \mathbb{Z} \).

For a partition \( \lambda \), define
\[
p_\lambda^{[\beta]} := p_{\lambda_1}^{[\beta]} p_{\lambda_2}^{[\beta]} \cdots, \qquad
q_\lambda^{[\beta]} := q_{\lambda_1}^{[\beta]} q_{\lambda_2}^{[\beta]} \cdots.
\]
Using the recurrence~\eqref{eq:q2n_to_qodd} and arguments completely analogous to those in the proof of Theorem~\ref{prop:expansion_of_GGamma}, we obtain the following result:

\begin{thm}\label{prop:expansion_of_gGamma}
Each of the following sets \( \mathrm{(i)} \)–\( \mathrm{(v)} \) forms a \( \mathbb{Q}(\beta) \)-basis of \( g\Gamma \):
\[
\mathrm{(i)} \{ p_\lambda^{[\beta]} \}_{\lambda \in \mathcal{OP}},\quad
\mathrm{(ii)} \{ q_\lambda^{[\beta]} \}_{\lambda \in \mathcal{OP}},\quad
\mathrm{(iii)} \{ \overline{q}_\lambda^{[\beta]} \}_{\lambda \in \mathcal{OP}},\quad
\mathrm{(iv)} \{ q_\lambda^{[\beta]} \}_{\lambda \in \mathcal{SP}},\quad
\mathrm{(v)}\{ \overline{q}_\lambda^{[\beta]} \}_{\lambda \in \mathcal{SP}}.
\]
\end{thm}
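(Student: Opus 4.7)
The plan is to mirror the proof of Theorem~\ref{prop:expansion_of_GGamma}, transplanted to the $[\beta]$-setting. A significant simplification is that $g\Gamma$ is a subring of $\Lambda$ rather than of a completion, so every expansion below will be a finite $\mathbb{Q}(\beta)$-linear combination. It is convenient to endow everything with the grading $\deg \beta = \deg x_i = 1$; then each $p_n^{[\beta]}$ and each $q_n^{[\beta]}$ is homogeneous of degree $n$, and each graded piece of $g\Gamma$ is finite-dimensional over $\mathbb{Q}$. Part (i) follows from Proposition~\ref{prop:p_isbasis_ofg} combined with the algebraic independence of $p_1^{[\beta]}, p_3^{[\beta]}, p_5^{[\beta]}, \dots$ over $\mathbb{Q}(\beta)$, which is already implicit in the proof of that proposition: after specializing $x_{N+1} = x_{N+2} = \cdots = 0$ and using \eqref{eq:betap_condition}, each odd $p_m^{[\beta]}(x_1,\dots,x_N)$ equals $p_m(x_1+\tfrac{\beta}{2},\dots,x_N+\tfrac{\beta}{2}) - N(\tfrac{\beta}{2})^m$, and the odd power sums in the translated variables are algebraically independent over $\mathbb{Q}(\beta)$ once $N$ is sufficiently large.

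For (ii), the goal is to show that the transition from $\{p_\lambda^{[\beta]}\}_{\lambda \in \mathcal{OP}}$ to $\{q_\lambda^{[\beta]}\}_{\lambda \in \mathcal{OP}}$ is invertible in each graded piece. By (i), the two families are indexed by the same finite set in each fixed degree, so the transition matrix is square with entries depending polynomially on $\beta$. Setting $\beta = 0$ reduces it to the classical change of basis between odd power sums and Schur $q$-functions on $\Gamma|_{\beta=0}$, which is invertible by \cite[\S III.8]{macbook}. Hence the determinant is a nonzero polynomial in $\beta$, and the transition is invertible over $\mathbb{Q}(\beta)$, giving (ii). Part (iii) then follows from (ii) via \eqref{eq:qb_to_overqb}, which writes $\overline{q}_n^{[\beta]}$ as $(-1)^n q_n^{[\beta]}$ plus lower-$q$ corrections carrying positive powers of $\beta$; extended multiplicatively, this produces a unitriangular change of basis from $\{q_\lambda^{[\beta]}\}_{\lambda \in \mathcal{OP}}$ to $\{\overline{q}_\lambda^{[\beta]}\}_{\lambda \in \mathcal{OP}}$.

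For (iv), I would adapt the argument of Lemma~\ref{eq:expansion_by_strict}. Solving \eqref{eq:q2n_to_qodd} for $(q_n^{[\beta]})^2$ yields the $[\beta]$-analogue of the rearranged recurrence used there, expressing $(q_n^{[\beta]})^2$ as a $\mathbb{Q}(\beta)$-linear combination of the strict-indexed products $q_{n+k}^{[\beta]}q_{n-k}^{[\beta]}$ (for $k=1,\dots,n-1$) together with $q_{2n}^{[\beta]}$, plus correction terms of strictly smaller total size. Iterating this substitution to eliminate repeated parts in $q_\lambda^{[\beta]}$—with the $\succ'$-order handling the reduction at each fixed size and the smaller-size corrections handled inductively on $|\lambda|$—produces a finite expansion of any $q_\lambda^{[\beta]}$ in $\{q_\mu^{[\beta]}\}_{\mu \in \mathcal{SP}}$. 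Linear independence then follows because Euler's identity together with (ii) makes the transition matrix at each fixed degree square, so the spanning property forces invertibility. Finally, (v) follows from (iv) exactly as (iii) followed from (ii). The main obstacle I expect is making this induction rigorous: the decisive structural point—in contrast to the $(\beta)$-case treated by Lemma~\ref{eq:expansion_by_strict}, where corrections increase size and force infinite sums in $G\Gamma_\sharp$—is that the corrections in \eqref{eq:q2n_to_qodd} decrease total size, which is precisely what allows the expansion to terminate inside $g\Gamma$.
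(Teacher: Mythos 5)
Your proposal is correct and follows essentially the same route as the paper, whose proof of Theorem~\ref{prop:expansion_of_gGamma} consists of a single sentence invoking the recurrence~\eqref{eq:q2n_to_qodd} and ``arguments completely analogous'' to those for Theorem~\ref{prop:expansion_of_GGamma}. You have in effect written out that analogy, and you correctly isolate the one genuinely new point --- that the correction terms in~\eqref{eq:q2n_to_qodd} have $|k|<2n$ and hence \emph{decrease} total degree, so the reductions terminate and all expansions are finite inside $\Lambda$ --- which is exactly what makes the transplanted argument yield a basis of $g\Gamma$ rather than an infinite-sum expansion.
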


\subsection{Application: Nakagawa–Naruse's bilinear form}\label{sec:dual_speces}

We define a nondegenerate bilinear form on \( G\Gamma_\sharp \otimes_{\mathbb{Q}(\beta)} g\Gamma \) by
\begin{equation}\label{eq:def_of_bilin}
G\Gamma_\sharp \otimes_{\mathbb{Q}(\beta)} g\Gamma \to \mathbb{Q}(\beta); \qquad 
f \otimes g \mapsto \langle f, g \rangle,
\end{equation}
where the pairing is defined on basis elements by
\[
\langle p_\lambda^{(\beta)}, p_\mu^{[\beta]} \rangle = 2^{-\ell(\lambda)} z_\lambda \delta_{\lambda,\mu}, \qquad \forall \lambda, \mu \in \mathcal{OP},
\]
with \( z_\lambda = \prod_{i \geq 1} i^{m_i} \cdot m_i! \), and \( m_i = m_i(\lambda) = \#\{ s \mid \lambda_s = i \} \).
When \( \beta = 0 \), this bilinear form reduces to the canonical \( \mathbb{Q} \)-valued bilinear form on \( \Gamma \) (see~\cite[\S III-8]{macbook}):
\[
\Gamma \otimes_{\mathbb{Q}} \Gamma \to \mathbb{Q}; \qquad 
f \otimes g \mapsto \langle f, g \rangle, \qquad 
\langle p_\lambda, p_\mu \rangle = 2^{-\ell(\lambda)} z_\lambda \delta_{\lambda,\mu}.
\]

\begin{thm}\label{thm:cauthyK}
We have the identity
\[
\sum_{\lambda \in \mathcal{OP}} 2^{\ell(\lambda)} z_\lambda^{-1} 
p_\lambda^{(\beta)}(x) p_\lambda^{[\beta]}(y) 
= \prod_{i,j} \frac{1 - \overline{x_i} y_j}{1 - x_i y_j}.
\]
In other words, the Cauchy kernel of the bilinear form~\eqref{eq:def_of_bilin} is given by 
\begin{equation}\label{eq:K-cauchy}
\prod_{i,j} \frac{1 - \overline{x_i} y_j}{1 - x_i y_j}.
\end{equation}
\end{thm}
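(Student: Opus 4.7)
The plan is to take logarithms and reduce the identity to the classical Cauchy identity for odd power sums. From $\sum_{n \text{ odd}} (2/n) z^n = \log\frac{1+z}{1-z}$ one obtains
\[
\exp\left(\sum_{n \text{ odd}}\frac{2}{n} p_n(a)p_n(b)\right) = \prod_{i,j}\frac{1+a_ib_j}{1-a_ib_j} = \sum_{\lambda \in \mathcal{OP}}\frac{2^{\ell(\lambda)}}{z_\lambda} p_\lambda(a) p_\lambda(b),
\]
so the claim is equivalent to
\[
\sum_{n \text{ odd}}\frac{2}{n} p_n^{(\beta)}(x)\, p_n^{[\beta]}(y) \;=\; \log\prod_{i,j}\frac{1-\overline{x_i}y_j}{1-x_iy_j}.
\]

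To give this rigorous meaning I would work with finitely many variables \( y_1,\dots,y_N \) and invoke the precise relation \eqref{eq:betap_condition}, namely \( p_n^{[\beta]}(y_1,\dots,y_N) = \sum_{j=1}^N\bigl[(y_j+\beta/2)^n - (\beta/2)^n\bigr] \). Setting \( a_i := x_i/(1+\beta x_i/2) \) so that \( p_n^{(\beta)}(x) = \sum_i a_i^n \), the left-hand side becomes
\[
\sum_{i,j}\sum_{n \text{ odd}}\frac{2}{n}\, a_i^n\bigl[(y_j+\tfrac{\beta}{2})^n - (\tfrac{\beta}{2})^n\bigr] \;=\; \sum_{i,j}\log\frac{(1+a_i(y_j+\beta/2))(1-a_i\beta/2)}{(1-a_i(y_j+\beta/2))(1+a_i\beta/2)}.
\]

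What remains is an elementary algebraic simplification. Using \( a_i = x_i/(1+\beta x_i/2) \), one computes \( 1-a_i(y_j+\beta/2) = (1-x_iy_j)/(1+\beta x_i/2) \), \( 1+a_i(y_j+\beta/2) = (1+\beta x_i+x_iy_j)/(1+\beta x_i/2) \), \( 1-a_i\beta/2 = 1/(1+\beta x_i/2) \), and \( 1+a_i\beta/2 = (1+\beta x_i)/(1+\beta x_i/2) \). Substituting, the argument of the logarithm collapses to \( (1+\beta x_i+x_iy_j)/\bigl((1-x_iy_j)(1+\beta x_i)\bigr) \), and the identity \( 1+\beta x_i+x_iy_j = (1+\beta x_i)(1-\overline{x_i}y_j) \) (which is immediate from \( \overline{x_i} = -x_i/(1+\beta x_i) \)) turns this into \( (1-\overline{x_i}y_j)/(1-x_iy_j) \), as required.

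The main subtlety is the formal nature of the substitution, since \( p_n^{[\beta]} \) corresponds to the divergent expression ``\( p_n(y+\beta/2) - p_n(\beta/2) \)''. Restricting to finitely many \( y \)-variables at a time makes the subtracted constant \( N(\beta/2)^n \) finite, and this is precisely what cancels the otherwise spurious factor one would get by naively applying the classical Cauchy kernel to the pair of sequences \( (a_i) \) and \( (\beta/2,\beta/2,\dots) \). Each step is a well-defined identity after specialization to \( y_1,\dots,y_N \); since this holds for every \( N \), it yields the claimed identity in the completion on which both sides of the Cauchy kernel live.
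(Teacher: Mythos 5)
Your proposal is correct and follows essentially the same route as the paper: both reduce the identity to the classical odd-power-sum Cauchy identity $\exp\bigl(2\sum_{m\text{ odd}}p_m(a)p_m(b)/m\bigr)=\prod_{i,j}\frac{1+a_ib_j}{1-a_ib_j}$ via the substitutions $a_i=x_i/(1+\tfrac{\beta}{2}x_i)$ and the formal interpretation $p_n^{[\beta]}=p_n(y+\tfrac{\beta}{2})-p_n(\tfrac{\beta}{2})$, and then verify the same elementary factorization of $\frac{1-\overline{x_i}y_j}{1-x_iy_j}$. Your explicit truncation to finitely many $y$-variables is a mild tightening of the paper's formal manipulation, not a different argument.
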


\begin{proof}
We use the well-known identity:
\[
\prod_{i,j} \frac{1 + X_i Y_j}{1 - X_i Y_j}
= \exp\left( 2 \sum_{m=1,3,5,\dots} \frac{p_m(X) p_m(Y)}{m} \right).
\]
We compute the right-hand side of the claimed identity as follows:
\[
\begin{aligned}
\prod_{i,j} \frac{1 - \overline{x_i} y_j}{1 - x_i y_j}
&=
\prod_{i,j}
\left(
\frac{1 + \frac{x_i}{1 + \frac{\beta}{2} x_i}(y_j + \tfrac{\beta}{2})}
     {1 - \frac{x_i}{1 + \frac{\beta}{2} x_i}(y_j + \tfrac{\beta}{2})}
\cdot
\frac{1 - \frac{x_i}{1 + \frac{\beta}{2} x_i} \cdot \frac{\beta}{2}}
     {1 + \frac{x_i}{1 + \frac{\beta}{2} x_i} \cdot \frac{\beta}{2}}
\right) \\
&=
\exp\left( 
2 \sum_{m=1,3,5,\dots} \frac{
p_m\left( \frac{x}{1 + \frac{\beta}{2} x} \right)
\left\{ p_m(y + \tfrac{\beta}{2}) - p_m(\tfrac{\beta}{2}) \right\}
}{m}
\right) \\
&=
\prod_{m=1,3,5,\dots} \sum_{k=0}^\infty 
\frac{2^k}{k! m^k} 
(p_m^{(\beta)})^k (p_m^{[\beta]})^k \\
&=
\sum_{\lambda \in \mathcal{OP}} 
2^{\ell(\lambda)} z_\lambda^{-1} 
p_\lambda^{(\beta)}(x) p_\lambda^{[\beta]}(y).
\end{aligned}
\]
\end{proof}

The function \( \prod_{i,j} \frac{1 - \overline{x_i} y_j}{1 - x_i y_j} \) agrees with the Cauchy kernel for the bilinear form of Nakagawa–Naruse~\cite[\S 6.2]{nakagawa2017generating} 
(see also~\cite[Definition 1.2]{LwMarberg2024} and~\cite[Equation (1)]{iwao2023}).
From this fact, we see that the bilinear form~\eqref{eq:def_of_bilin} coincides with Nakagawa–Naruse's \( K \)-theoretic bilinear form.  
Theorem~\ref{thm:cauthyK} shows that the \( K \)-theoretic Cauchy kernel arises naturally from the use of \( \beta \)-deformed power sums.

\section{The \( \mathbb{Z}[\beta] \)-algebra structure of \( g\Gamma \)}\label{eq:z-str_of_gGamma}

Up to this point, we have studied the algebras mainly over the field \( \mathbb{Q}(\beta) \).  
However, when applying these algebras to geometry, it becomes important to also consider the integrality of their coefficients.
In this final section, we introduce some basic properties of \( g\Gamma \) viewed as a \( \mathbb{Z}[\beta] \)-algebra.

One general method to ensure the integrality of various constants is to provide their combinatorial interpretations.  
For instance, Lewis–Marberg~\cite{LwMarberg2024} gave combinatorial expressions of \( K \)-theoretic symmetric polynomials in terms of Young diagrams; their results proved a Nakagawa-Naruse's conjecture \cite{nakagawa2017generating} on the combinatorial expression of $gp$ and $gq$ functions (to be defined below).

However, in this section we avoid combinatorial arguments and instead present a few results that can be proved using only elementary algebraic computations.

\subsection{The \( gp_n \) functions}

As noted in Section~\ref{sec:KQ}, our main interest lies in \( G\Gamma \) rather than \( G\Gamma_\sharp \).  
Thus, we restrict the bilinear form~\eqref{eq:def_of_bilin} to \( G\Gamma \).  
When we equip \( G\Gamma_\sharp \) with the weak topology induced by~\eqref{eq:def_of_bilin}, the subset \( G\Gamma \subsetneq G\Gamma_\sharp \) becomes a dense subspace.  
Hence, the restriction
\[
G\Gamma \otimes_{\mathbb{Q}(\beta)} g\Gamma \to \mathbb{Q}(\beta)
\]
also defines a nondegenerate bilinear form.

Define the \textit{$gp$ polynomials} \( gp_n = gp_n(x) \) by the generating function
\begin{equation}\label{eq:gp_to_q}
\sum_{n=1}^\infty gp_n z^n = \frac{1}{2 + \beta z} \sum_{n=1}^\infty q_n^{[\beta]} z^n.
\end{equation}
Equivalently, we have
\begin{equation}\label{eq:gp_to_qbeta}
gp_n = \tfrac{1}{2}\left(
q_n^{[\beta]} - \tfrac{\beta}{2} q_{n-1}^{[\beta]} + \tfrac{\beta^2}{4} q_{n-2}^{[\beta]} - \cdots + (-\tfrac{\beta}{2})^{n-1} q_1^{[\beta]}
\right).
\end{equation}

\begin{lemma}
For any positive integer \( N \), the specialization \( gp_n(x_1, \dots, x_N) := gp_n(x)|_{x_{N+1} = x_{N+2} = \cdots = 0} \) is a symmetric polynomial with coefficients in \( \mathbb{Z}[\beta] \).
\end{lemma}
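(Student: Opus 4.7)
The plan is to manipulate the generating function
\[
\sum_{n=1}^\infty gp_n z^n = \frac{Q^{[\beta]}(z)-1}{2+\beta z}
\]
from~\eqref{eq:gp_to_q}, after the specialization $x_{N+1}=x_{N+2}=\cdots=0$, so as to display the right-hand side directly as a formal power series in $z$ whose coefficients lie in $\mathbb{Z}[\beta][x_1,\dots,x_N]$. Symmetry of each $gp_n$ in the variables is automatic from the manifestly symmetric expression for $Q^{[\beta]}(z)$, so the entire content of the claim is integrality.

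The key identity, analogous to the trick used in the proof of Lemma~\ref{lemma:finiteness_of_GQ}, is
\[
\frac{1-x_i\overline{z}}{1-x_iz} = 1 + \frac{x_iz(2+\beta z)}{(1+\beta z)(1-x_iz)},
\]
which follows directly from $z-\overline{z}=z(2+\beta z)/(1+\beta z)$. The crucial point is that the ``error term'' on the right already carries a full factor of $(2+\beta z)$, which is exactly the factor the denominator in the generating function must cancel. Setting $u_i := x_iz(2+\beta z)/\bigl((1+\beta z)(1-x_iz)\bigr)$, I would then expand
\[
Q^{[\beta]}(z)-1 = \prod_{i=1}^N(1+u_i)-1 = \sum_{\emptyset\neq I\subseteq\{1,\dots,N\}}\prod_{i\in I}u_i,
\]
so that each summand indexed by $I$ contains $(2+\beta z)^{|I|}$ as a factor. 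Dividing out a single copy of $(2+\beta z)$ yields
\[
\sum_{n\geq 1} gp_n z^n =\sum_{\emptyset\neq I\subseteq\{1,\dots,N\}}(2+\beta z)^{|I|-1}\cdot\frac{1}{(1+\beta z)^{|I|}}\prod_{i\in I}\frac{x_iz}{1-x_iz}.
\]

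For each subset $I$ the prefactor $(2+\beta z)^{|I|-1}\prod_{i\in I}(x_iz)$ is a polynomial in $\mathbb{Z}[\beta,x_1,\dots,x_N][z]$, and the geometric expansions $(1+\beta z)^{-|I|}=\sum_{n\geq 0}\binom{-|I|}{n}\beta^n z^n$ and $(1-x_iz)^{-1}=\sum_{n\geq 0}x_i^n z^n$ both have coefficients in $\mathbb{Z}[\beta,x_1,\dots,x_N]$. Since $\mathbb{Z}[\beta,x_1,\dots,x_N][[z]]$ is closed under multiplication, extracting the coefficient of $z^n$ identifies $gp_n(x_1,\dots,x_N)$ as an element of $\mathbb{Z}[\beta][x_1,\dots,x_N]$. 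I do not foresee any genuine obstacle: the argument rests entirely on the observation that each $u_i$ already carries a factor of $(2+\beta z)$, which turns the division by $2+\beta z$ into a transparent algebraic manipulation rather than forcing a factor-theorem argument in $\mathbb{Z}[\beta,z,x_1,\dots,x_N]$, where the integrality of the quotient would be far less clear.
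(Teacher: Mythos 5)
Your proof is correct, and it takes a genuinely different route from the paper's. The paper first uses \eqref{eq:gp_to_qbeta} to place the coefficients of $gp_n(x_1,\dots,x_N)$ in $\mathbb{Z}[\beta,\tfrac12]$, then writes the generating function as a single fraction whose numerator vanishes at $z=-\tfrac{2}{\beta}$, applies the factor theorem to get coefficients in $\mathbb{Z}[\beta^{\pm1}]$, and concludes via the intersection $\mathbb{Z}[\beta^{\pm1}]\cap\mathbb{Z}[\beta,\tfrac12]=\mathbb{Z}[\beta]$; the two-ring intersection is needed precisely because the root $-\tfrac{2}{\beta}$ does not lie in $\mathbb{Z}[\beta]$ (this is the point of the paper's footnote that the integrality ``is not trivial''). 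You instead make the divisibility by $2+\beta z$ manifest term by term: the identity $\frac{1-x_i\overline{z}}{1-x_iz}=1+u_i$ with $u_i=\frac{x_iz(2+\beta z)}{(1+\beta z)(1-x_iz)}$ is correct (it is just $z-\overline{z}=\frac{z(2+\beta z)}{1+\beta z}$), the inclusion--exclusion expansion $\prod_{i=1}^N(1+u_i)-1=\sum_{\emptyset\neq I}\prod_{i\in I}u_i$ puts an explicit factor $(2+\beta z)^{|I|}$ with $|I|\geq 1$ in every summand, and after cancelling one copy of $2+\beta z$ every remaining factor expands in $\mathbb{Z}[\beta][x_1,\dots,x_N][[z]]$. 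This buys a one-step integrality argument that avoids both the factor theorem and the intersection of coefficient rings, at the cost of an expansion over the $2^N-1$ nonempty subsets rather than a single closed-form fraction; both arguments are equally rigorous, and yours is arguably the more transparent explanation of \emph{why} the $2$ in the denominator is harmless.
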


\begin{proof}
From~\eqref{eq:Q}, we know that each \( q_n^{[\beta]}(x_1, \dots, x_N) \) is a symmetric polynomial with coefficients in \( \mathbb{Z}[\beta] \).  
Then from~\eqref{eq:gp_to_qbeta}, it follows that \( gp_n(x_1, \dots, x_N) \in \mathbb{Z}[\beta, \tfrac{1}{2}] \).  
The key question is whether the coefficients actually lie in \( \mathbb{Z}[\beta] \).\footnote{This might seem straightforward, but in fact it is not trivial.}
From~\eqref{eq:Q} and~\eqref{eq:gp_to_q}, we have
\[
\begin{aligned}
&\sum_{n=1}^\infty gp_n(x_1, \dots, x_N) z^n\\
&= \frac{1}{2 + \beta z} \left( 
\prod_{i=1}^N \frac{1 + x_i \cdot \frac{z}{1 + \beta z}}{1 - x_i z} - 1 
\right) \\
&= 
\frac{ 
\prod_{i=1}^N (1 + \beta z + x_i z) 
- \prod_{i=1}^N (1 - x_i z)(1 + \beta z) 
}{2 + \beta z} \cdot 
\frac{1}{\prod_{i=1}^N (1 - x_i z)(1 + \beta z)}.
\end{aligned}
\]
The numerator vanishes at \( z = -\frac{2}{\beta} \), so by the factor theorem, the rational function
\[
\frac{ 
\prod_{i=1}^N (1 + \beta z + x_i z) 
- \prod_{i=1}^N (1 - x_i z)(1 + \beta z)
}{2 + \beta z}
\]
is actually a polynomial in \( x_i, z \) with coefficients in \( \mathbb{Z}[\beta^{\pm 1}] \).  
Since \( -\frac{2}{\beta} \in \mathbb{Z}[\beta^{-1}] \), we find that the coefficients of \( gp_n(x_1, \dots, x_N) \) are contained in \( \mathbb{Z}[\beta^{\pm 1}] \cap \mathbb{Z}[\beta, \tfrac{1}{2}] = \mathbb{Z}[\beta] \).
\end{proof}

\begin{remark}
The function \( gp_n \) plays the role of the dual to \( GQ_n \) in the following sense:
\[
\langle GQ_m, gp_n \rangle = \delta_{m,n}, \qquad (m, n: \text{ odd}),
\]
see~\cite[Equation (35)]{iwao2023}.
\end{remark}

\subsection{The \( \mathbb{Z}[\beta] \)-algebra \( g\Gamma_{\mathbb{Z}} \)}

We define
\[
g\Gamma_{\mathbb{Z}} := \mathbb{Z}[\beta][gp_1, gp_2, gp_3, \dots].
\]
The goal of this subsection is to prove that \( g\Gamma_{\mathbb{Z}} \) is actually generated by the odd-degree \( gp_m \); we will show
\begin{equation}\label{eq:main}
g\Gamma_{\mathbb{Z}} = \mathbb{Z}[\beta][gp_1, gp_3, gp_5, \dots].
\end{equation}

From Proposition~\ref{prop:expansion_of_gGamma} and~\eqref{eq:gp_to_qbeta}, we already know
\[
g\Gamma = 
g\Gamma_{\mathbb{Z}} \otimes_{\mathbb{Z}[\beta]} \mathbb{Q}(\beta) 
= \mathbb{Q}(\beta)[gp_1, gp_3, gp_5, \dots],
\]
which follows by the same arguments used to prove Theorem~\ref{prop:expansion_of_GGamma}.  
The remaining question is whether the expansion of \( gp_{2n} \) in terms of \( gp_1, gp_3, \dots \) involves only coefficients in \( \mathbb{Z}[\beta] \).

We now present a proof of~\eqref{eq:main}.

\begin{lemma}\label{lemma:K-expansion}
Let \( f(z) := \sum_{n=1}^\infty gp_n z^n \), and define \( F(z) := \frac{z f(z)}{1 + f(z)} \). Then the identity
\begin{equation}\label{eq:F=Fbar}
F(z) = F(\overline{z})
\end{equation}
holds.
\end{lemma}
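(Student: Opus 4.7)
The plan is to rewrite $F(z)$ as a rational expression in $Q^{[\beta]}(z)$ and $\beta z$, then invoke the functional equation $Q^{[\beta]}(z)\,\overline{Q}^{[\beta]}(z) = 1$ from \eqref{eq:QoverQ} together with the elementary identity $1 + \beta\overline{z} = 1/(1+\beta z)$ to read off the invariance.

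By \eqref{eq:gp_to_q} one has $f(z) = (Q^{[\beta]}(z) - 1)/(2 + \beta z)$, hence
\[
1 + f(z) = \frac{Q^{[\beta]}(z) + 1 + \beta z}{2 + \beta z},
\]
and substituting these into the definition of $F$ yields the closed form
\[
F(z) = \frac{z\bigl(Q^{[\beta]}(z) - 1\bigr)}{Q^{[\beta]}(z) + 1 + \beta z}.
\]
This closed form is the heart of the argument.

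Next I would substitute $\overline{z}$ for $z$. Since $\overline{Q}^{[\beta]}(z) = Q^{[\beta]}(\overline{z})$ by definition and $Q^{[\beta]}(z)\,\overline{Q}^{[\beta]}(z) = 1$ by \eqref{eq:QoverQ}, one obtains $Q^{[\beta]}(\overline{z}) = 1/Q^{[\beta]}(z)$; moreover, the defining formula $\overline{z} = -z/(1+\beta z)$ immediately gives $1 + \beta\overline{z} = 1/(1+\beta z)$ and $\overline{z}(1+\beta z) = -z$. Writing $Q := Q^{[\beta]}(z)$ for brevity and multiplying the numerator and denominator of
\[
F(\overline{z}) = \frac{\overline{z}\,(1/Q - 1)}{1/Q + 1/(1+\beta z)}
\]
by $Q(1+\beta z)$, the numerator reduces to $\overline{z}(1-Q)(1+\beta z) = -z(1-Q) = z(Q-1)$, while the denominator reduces to $(1+\beta z) + Q = Q + 1 + \beta z$. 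Hence $F(\overline{z}) = F(z)$, as required.

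There is no real obstacle here: once the closed form is in hand, the argument is a direct algebraic manipulation. The only point to keep in mind is that $z \mapsto \overline{z}$ acts by inversion simultaneously on $Q^{[\beta]}(z)$ and on $1 + \beta z$, and the closed form above is precisely the unique combination that makes this symmetry manifest.
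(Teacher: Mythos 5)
Your proof is correct and rests on exactly the same two ingredients as the paper's: the relation $Q^{[\beta]}(z)=1+(2+\beta z)f(z)$ coming from \eqref{eq:gp_to_q}, and the functional equation \eqref{eq:QoverQ}, combined with the elementary identities for $\overline{z}$. The only difference is organizational — you solve for $F$ as an explicit rational function of $Q^{[\beta]}(z)$ and check invariance directly, whereas the paper manipulates the product identity into a relation between $\tfrac{f(z)}{1+f(z)}$ and $\tfrac{f(\overline{z})}{1+f(\overline{z})}$ — so this is essentially the same argument.
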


\begin{proof}
From~\eqref{eq:Q} and~\eqref{eq:gp_to_q}, we have \( Q^{[\beta]}(z) = 1 + (2 + \beta z) f(z) \).  
Substituting this into~\eqref{eq:QoverQ}, we have
\begin{equation}\label{eq:rel_of_f(z)}
\left\{ 1 + (2 + \beta z) f(z) \right\} \left\{ 1 + (2 + \beta \overline{z}) f(\overline{z}) \right\} = 1.
\end{equation}
Using \( 2 + \beta \overline{z} = \frac{2 + \beta z}{1 + \beta z} \), we simplify~\eqref{eq:rel_of_f(z)} to obtain
\[
(1 + \beta z) \cdot \frac{f(z)}{1 + f(z)} + \frac{f(\overline{z})}{1 + f(\overline{z})} = 0.
\]
Multiplying both sides by \( -\overline{z} = \frac{z}{1 + \beta z} \), we obtain
\[
\frac{z f(z)}{1 + f(z)} - \frac{\overline{z} f(\overline{z})}{1 + f(\overline{z})} = 0,
\]
which proves~\eqref{eq:F=Fbar}.
\end{proof}

From \eqref{eq:F=Fbar}, $F(z)$ can be regarded as a ``$K$-theoretic even function.''
We call any function \( F(z) \) satisfying \( F(z) = F(\overline{z}) \) a \textit{\( K \)-even function}.
If a \( K \)-even function $F(z)$ has an expansion
\[
F(z) = c_p z^p + c_{p+1} z^{p+1} + \cdots \quad (c_p \in \mathbb{Q}(\beta)),
\]
then the leading exponent \( p \) must be even.  
This follows immediately by comparing the leading terms in the identity \( F(z) = F(\overline{z}) \).

\begin{lemma}\label{lemma:expansion_of_Keven}
Suppose a \( K \)-even function has an expansion
\[
F(z) = c_1 z^2 + c_2 z^3 + \cdots.
\]
Then, each coefficient \( c_{2k} \) can be expressed as a polynomial in \( c_1, c_3, c_5, \dots, c_{2k-1} \) with coefficients in \( \mathbb{Z}[\beta] \).
\end{lemma}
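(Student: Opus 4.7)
The plan is to exploit the fundamental $K$-invariant
\[
u := z \cdot \overline{z} = \frac{-z^2}{1+\beta z} \in \mathbb{Z}[\beta][[z]],
\]
whose leading term is $-z^2$. A short calculation gives $z + \overline{z} = -\beta z^2/(1+\beta z) = -\beta u$, so both elementary symmetric functions of the pair $\{z,\overline{z}\}$ already lie in $\mathbb{Z}[\beta][u]$. Consequently, the subring of $\mathbb{Q}(\beta)[[z]]$ fixed by the involution $z \leftrightarrow \overline{z}$ is exactly $\mathbb{Q}(\beta)[[u]]$, and any $K$-even power series $F(z)$ with $F(0)=0$ admits a unique expansion
\[
F(z) = \sum_{k\geq 1} d_k u^k, \qquad d_k \in \mathbb{Q}(\beta).
\]

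Next, I would invert the triangular system linking the $d_k$ to the odd-indexed coefficients of $F$. Since $u^k = (-1)^k z^{2k} + O(z^{2k+1})$, extracting the coefficient of $z^{2k}$ (which is $c_{2k-1}$ in the paper's indexing $F(z) = c_1 z^2 + c_2 z^3 + \cdots$) from $F(z) = \sum_{j\leq k} d_j u^j + O(z^{2k+1})$ yields a relation of the form
\[
c_{2k-1} = (-1)^k d_k + \sum_{j=1}^{k-1} d_j \cdot \alpha_{k,j}, \qquad \alpha_{k,j} \in \mathbb{Z}[\beta],
\]
whose leading coefficient $(-1)^k$ is a unit in $\mathbb{Z}[\beta]$. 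A straightforward induction on $k$ then expresses each $d_k$ as a $\mathbb{Z}[\beta]$-linear combination of $c_1, c_3, \dots, c_{2k-1}$.

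Finally, extracting the coefficient of $z^{2k+1}$ in $F(z) = \sum_{j\leq k} d_j u^j + O(z^{2k+2})$ expresses $c_{2k}$ as a $\mathbb{Z}[\beta]$-linear combination of $d_1, \dots, d_k$. Substituting the result of the previous step gives $c_{2k} \in \mathbb{Z}[\beta][c_1, c_3, \dots, c_{2k-1}]$, completing the proof.

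The main obstacle is the structural step in the first paragraph, namely confirming that the ring of $K$-invariants in $\mathbb{Q}(\beta)[[z]]$ is generated by the single element $u$ rather than by two independent invariants. This collapse rests on the identity $z \oplus \overline{z} = z + \overline{z} + \beta z\overline{z} = 0$, which is just the fact that $\overline{z}$ is the $K$-additive inverse of $z$, and it is precisely this identity that prevents half-integer denominators from appearing in the recursion and ensures the final coefficients lie in $\mathbb{Z}[\beta]$ rather than in $\mathbb{Z}[\beta, \tfrac12]$.
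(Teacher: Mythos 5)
Your proposal is correct and follows essentially the same route as the paper: since $z\oplus\overline{z}=0$ gives $u:=z\overline{z}=-\beta^{-1}(z+\overline{z})$, your subtracted terms $d_k u^k$ coincide (up to sign) with the paper's $\alpha_{2k}\beta^{-k}(z+\overline{z})^k$, and both arguments are the same triangular elimination against the even-order coefficients followed by reading off the odd-order ones. Your phrasing via $u$ is marginally cleaner in that it avoids dividing by $\beta$, and it makes explicit that $c_{2k}$ is in fact a $\mathbb{Z}[\beta]$-\emph{linear} combination of $c_1,c_3,\dots,c_{2k-1}$, consistent with the paper's examples.
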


\begin{proof}
Define
$
F_1(z) := F(z) - c_1\beta^{-1} (z + \overline{z})
$.
Using the equation \( z + \overline{z} = \frac{\beta z^2}{1 + \beta z} \), we expand $F_1(z)$ as 
\[
F_1(z) = (c_2 + c_1 \beta) z^3 + (c_3 - c_1 \beta^2) z^4 + (c_4 + c_1 \beta^3) z^5 + \cdots.
\]
Since \( F_1(z) \) is also \( K \)-even, we must have \( c_2 = -c_1 \beta \).
Next, define \( F_2(z) := F_1(z) - (c_3 - c_1 \beta^2) \beta^{-2}(z + \overline{z})^2 \).  
Expanding $F_2(z)$, we have
\[
F_2(z) = (c_4 - c_1 \beta^3 + 2 c_3 \beta) z^5 
+ (c_5 + 2 c_1 \beta^4 - 3 c_3 \beta^2) z^6 
+ (c_6 - 3 c_1 \beta^5 + 4 c_3 \beta^3) z^7 + \cdots.
\]
Again, since \( F_2(z) \) is \( K \)-even, we obtain \( c_4 = c_1 \beta^3 - 2 c_3 \beta \).

For general $k>1$, define the $K$-even functions $F_k(z)$ by the recursive formula $F_{k+1}(z)=F_k(z)-\alpha_{2k+2}\beta^{-k-1}(z+\overline{z})^{k+1}$, where $\alpha_{2k+2}$ is a polynomial in $c_1,c_3,\dots,c_{2k+1}$ defined by the expansion $F_k(z)=\alpha_{2k+2}z^{2k+2}+\alpha_{2k+3}z^{2k+3}+\cdots$.
Since $F_{k+1}(z)$ is $K$-even, we obtain the expression
\[
c_{2k+2}=
(\text{a polynomial in } c_1,c_3,\dots,c_{2k+1}\text{ with coefficients in }\ZZ[\beta]).
\] 
By iterating this process, we obtain for all \( k \geq 1 \) that \( c_{2k} \) is a \( \mathbb{Z}[\beta] \)-polynomial in \( c_1, c_3, \dots, c_{2k-1} \).
\end{proof}

\begin{thm}
The identity~\eqref{eq:main} holds. That is, we have
\[
g\Gamma_{\mathbb{Z}} = \mathbb{Z}[\beta][gp_1, gp_3, gp_5, \dots].
\]
\end{thm}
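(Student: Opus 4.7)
The plan is to combine the triangular recursion between the $gp_n$ and the coefficients of the $K$-even generating series $F(z)$ from Lemma~\ref{lemma:K-expansion} with the parity principle of Lemma~\ref{lemma:expansion_of_Keven}. Since the inclusion $\mathbb{Z}[\beta][gp_1, gp_3, gp_5, \dots] \subset g\Gamma_{\mathbb{Z}}$ is immediate, everything reduces to proving that each $gp_{2n}$ lies in $\mathbb{Z}[\beta][gp_1, gp_3, gp_5, \dots]$, which I would attack by induction on $n$.

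First I would set up the bridge between the $gp_n$'s and the coefficients of $F(z)$. Writing $F(z) = \sum_{k\geq 1} c_k z^{k+1}$, the defining identity $F(z)(1+f(z)) = zf(z)$ gives the triangular recursion
\[
c_n = gp_n - \sum_{j=1}^{n-1} c_j\, gp_{n-j}, \qquad n \geq 1,
\]
which can be read in either direction: each $c_n$ lies in $\mathbb{Z}[gp_1, \dots, gp_n]$, and conversely each $gp_n$ lies in $\mathbb{Z}[c_1, \dots, c_n]$, since the recursion starts from $c_1 = gp_1$ and is invertible over $\mathbb{Z}$. Lemma~\ref{lemma:expansion_of_Keven} then supplies the decisive additional information that every $c_{2k}$ is a $\mathbb{Z}[\beta]$-polynomial in $c_1, c_3, \dots, c_{2k-1}$.

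I would then close the argument by induction on $n$, with inductive hypothesis that $gp_m \in \mathbb{Z}[\beta][gp_1, gp_3, gp_5, \dots]$ for every $m < n$. The odd case is trivial, so assume $n = 2k$ is even and isolate
\[
gp_{2k} = c_{2k} + \sum_{j=1}^{2k-1} c_j\, gp_{2k-j}.
\]
In the sum on the right, each factor $gp_{2k-j}$ has index strictly less than $2k$ and is handled by induction. Each $c_j$ with $j < 2k$ odd lies in $\mathbb{Z}[gp_1, \dots, gp_j]$ and is therefore in $\mathbb{Z}[\beta][gp_1, gp_3, \dots]$ by the inductive hypothesis; each $c_j$ with $j < 2k$ even is a $\mathbb{Z}[\beta]$-polynomial in the odd-index $c$'s of still smaller index by Lemma~\ref{lemma:expansion_of_Keven}, and hence also lies in $\mathbb{Z}[\beta][gp_1, gp_3, \dots]$. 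Finally $c_{2k}$ itself is a $\mathbb{Z}[\beta]$-polynomial in $c_1, c_3, \dots, c_{2k-1}$, all of which fall under the previous case. Putting these pieces together yields $gp_{2k} \in \mathbb{Z}[\beta][gp_1, gp_3, gp_5, \dots]$ and completes the induction.

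I expect the only real obstacle to be bookkeeping: at each step of the induction one has to track precisely which $c$'s and which $gp$'s have already been certified to belong to $\mathbb{Z}[\beta][gp_1, gp_3, gp_5, \dots]$, and one must ensure that the nested application of Lemma~\ref{lemma:expansion_of_Keven} (to the even-index $c$'s) followed by the inverse triangular recursion (to the resulting odd-index $c$'s) never forces us to re-express $gp_{2k}$ itself. This is precisely why the strict inequalities $j < 2k$ and $m < 2k$ in the decomposition above are essential, and they hold automatically because the recursion for $c_n$ involves only $c_j$ with $j < n$ and $gp_m$ with $m \leq n$. No genuine analytic difficulty appears; once the combinatorics of the two recursions are laid out, integrality is preserved by construction.
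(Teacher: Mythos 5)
Your proposal is correct and takes essentially the same route as the paper: both pass to the coefficients $c_k$ of $F(z)/z = f(z)/(1+f(z))$, invoke Lemma~\ref{lemma:expansion_of_Keven} to express the even-index $c$'s through the odd-index ones, and exploit the $\mathbb{Z}$-triangularity between $\{c_n\}$ and $\{gp_n\}$. Your explicit induction via the recursion $c_n = gp_n - \sum_{j=1}^{n-1} c_j\, gp_{n-j}$ is merely a tidier bookkeeping of the paper's substitution argument, which instead expands $f(z)=\varphi(z)+\varphi(z)^2+\cdots$ and solves for $c_{2n+1}$ in terms of the odd $gp$'s before substituting back.
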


\begin{proof}
It suffices to prove that \( gp_{2n} \) can be expressed as a \( \mathbb{Z}[\beta] \)-polynomial in \( gp_1, gp_3, \dots, gp_{2n-1} \).
Let \( f(z) = \sum_{n=1}^\infty gp_n z^n \) and \( F(z) = \frac{z f(z)}{1 + f(z)} \) be the functions introduced in Lemma \ref{lemma:K-expansion}.
Consider the formal power series $\varphi(z)$ defined by
\[
\varphi(z) := \frac{F(z)}{z} = \frac{f(z)}{1 + f(z)} = c_1 z + c_2 z^2 + c_3 z^3 + \cdots.
\]
Applying Lemma~\ref{lemma:expansion_of_Keven} to the $K$-even function $F(z)=c_1z^2+c_2z^3+\cdots$, we find that \( c_{2k} \) is a \( \mathbb{Z}[\beta] \)-polynomial in \( c_1, c_3, \dots, c_{2k-1} \).
From the identity
\begin{equation}\label{eq:phi_to_f}
f(z) = \sum_{n=1}^\infty gp_n z^n = \frac{\varphi(z)}{1 - \varphi(z)} 
= \varphi(z) + \varphi(z)^2 + \varphi(z)^3 + \cdots,
\end{equation}
it follows that the coefficient of \( z^{2n+1} \) in this expansion is of the form
\[
gp_{2n+1} = c_{2n+1} + 
(\text{a polynomial in } c_1,c_3,\dots,c_{2n-1}\text{ with coefficients in }\ZZ[\beta]).
\]
Since \( c_2, c_4, \dots, c_{2n} \) are polynomials in \( c_1, c_3, \dots, c_{2n-1} \), we deduce:
\[
c_{2n+1} = gp_{2n+1} + 
(\text{a polynomial in } gp_1,gp_3,\dots,gp_{2n-1}\text{ with coefficients in }\ZZ[\beta]).
\]
Substituting this into the expression for \( f(z) \), we find that each \( gp_{2n} \) is a polynomial in \( gp_1, gp_3, \dots, gp_{2n-1} \) with coefficients in \( \mathbb{Z}[\beta] \).
\end{proof}

\begin{exa}[Calculation of $gp_2$ and $gp_4$]
Repeating the procedure in the proof of Lemma \ref{lemma:expansion_of_Keven}, we obtain the sequence 
\begin{equation}\label{eq:seq_of_c}
(c_2,c_4,c_6,\dots)=(-c_1\beta,c_1\beta^3-2c_3\beta ,-3c_1\beta^5+5c_3\beta^3-3c_5\beta,\dots).
\end{equation}
From \eqref{eq:phi_to_f}, we obtain the expression
\[
\sum_{n=1}^\infty gp_nz^n=\sum_{\substack{|\lambda|=n\\n\geq 1}}
\binom{\ell(\lambda)}{m_1,m_2,\dots,m_{\ell(\lambda)}}c_\lambda z^n,
\]
where \( m_i = m_i(\lambda) = \#\{ s \mid \lambda_s = i \} \), $c_{\lambda}=\prod_{i=1}^{\ell(\lambda)}c_{\lambda_i}$, and 
\[
\binom{a}{b_1,b_2,\dots,b_\ell}=\frac{a!}{b_1!\cdot b_2!\cdots b_\ell!\cdot (a-b_1-b_2-\dots-b_\ell)!}.
\]
Therefore, we obtain the series of equations
\[
\begin{gathered}
gp_1=c_1,\quad gp_2=c_1^2+c_2,\quad gp_3=c_1^3+2c_1c_2+c_3,\quad
gp_4=c_1^4+3c_1^2c_2+2c_1c_3+c_2^2+c_4.
\end{gathered}
\]
From these equations and \eqref{eq:seq_of_c}, we obtain
\[
\begin{gathered}
c_1=gp_1,\quad 
c_2=-gp_1\beta,\quad 
gp_2=gp_1^2-gp_1\beta,\\
c_3=gp_3-gp_1^3+2gp_1^2\beta,\quad 
c_4=-2gp_3\beta+2gp_1^3\beta-4gp_1^2\beta^2+gp_1\beta^3,\\
gp_4=-gp_1^4+2gp_1gp_3+3gp_1^3\beta-2gp_3\beta-3gp_1^2\beta^2+gp_1\beta^3.
\end{gathered}
\]
\end{exa}



\end{document}